\DeclarePairedDelimiterXPP{\normsup}[1]{}{\|}{\|}{_{\infty}}{#1}
\DeclarePairedDelimiterXPP{\normfp}[1]{}{\|}{\|}{_{\bullet p}}{#1}
\DeclarePairedDelimiterXPP{\normfq}[1]{}{\|}{\|}{_{\bullet q}}{#1}
\DeclarePairedDelimiterXPP{\normp}[1]{}{\|}{\|}{_{p}}{#1}
\DeclarePairedDelimiterXPP{\normq}[1]{}{\|}{\|}{_{q}}{#1}
\DeclarePairedDelimiterXPP{\normr}[1]{}{\|}{\|}{_{r}}{#1}
\DeclarePairedDelimiterXPP{\normfi}[2]{}{\|}{\|}{_{#1}}{#2}
\DeclarePairedDelimiterXPP{\normpot}[1]{}{\|}{\|}{_{p/2}}{#1}
\DeclarePairedDelimiterXPP{\normx}[1]{}{\|}{\|}{_{\bx}}{#1}
\DeclarePairedDelimiterXPP{\normfx}[1]{}{\|}{\|}{_{\bullet\bx}}{#1}
\DeclarePairedDelimiterXPP{\abs}[1]{}{|}{|}{}{#1}
\DeclarePairedDelimiterXPP{\normP}[1]{}{\|}{\|}{_{\uP}}{#1}
\DeclarePairedDelimiterXPP{\generalnorm}[2]{}{\|}{\|}{_{#1}}{#2}
\DeclarePairedDelimiter{\normop}{|}{|}
\DeclarePairedDelimiterXPP{\normhold}[2]{H_{#1}}{(}{)}{}{#2}
\newcommand{\okappa}{\overline{\kappa}}
\newcommand{\RR}{\mathbb{R}}
\newcommand{\vd}{\,\mathrm{d}}
\newcommand{\dD}{\mathrm{D}}
\newcommand{\fd}{\mathfrak{d}}
\newcommand{\bx}{\mathbf{x}}
\newcommand{\bb}{\mathbf{b}}
\newcommand{\Id}{\mathsf{Id}}
\newcommand{\fO}{\mathfrak{O}}
\newcommand{\fI}{\mathfrak{I}}
\newcommand{\fe}{\mathfrak{e}}
\newcommand{\fF}{\mathfrak{F}}
\newcommand{\ff}{\mathfrak{f}}
\newcommand{\fH}{\mathfrak{H}}
\newcommand{\fG}{\mathfrak{G}}
\newcommand{\fP}{\mathfrak{P}}
\newcommand{\cC}{\mathcal{C}}
\newcommand{\uU}{\mathrm{U}}
\newcommand{\uV}{\mathrm{V}}
\newcommand{\uB}{\mathrm{B}}
\newcommand{\uP}{\mathrm{P}}
\newcommand{\uPs}{\mathrm{P}^\star}
\newcommand{\uCs}{\mathrm{C}^\star}
\newcommand{\uPsim}{\mathrm{P}^\sim}
\newcommand{\uCsim}{\mathrm{C}^\sim}
\newcommand{\uQ}{\mathrm{Q}}
\newcommand{\uQs}{\mathrm{Q}^\star}
\newcommand{\uR}{\mathrm{R}}
\newcommand{\uT}{\mathrm{T}}
\newcommand{\uFi}{\mathrm{Fi}}
\newcommand{\uC}{\mathrm{C}}
\newcommand{\uG}{\mathrm{G}}
\newcommand{\uW}{\mathrm{W}}
\newcommand{\uX}{\mathrm{X}}
\newcommand{\zd}{z^\dag}
\newcommand{\yd}{y^\dag}
\newcommand{\ys}{y^\sharp}
\newcommand{\yf}{y^\flat}
\newcommand{\ystar}{y^*}
\newcommand\given{\nonscript\:\delimsize\vert\nonscript\:\mathopen{}} 
\newcommand\SetSymbol[1][]{\nonscript\:#1\vert\nonscript\:\mathopen{}\allowbreak}
\DeclarePairedDelimiterX\Set[1]\{\}{\renewcommand\given{\SetSymbol[\delimsize]}#1}
\DeclarePairedDelimiterX\Prb[1](){\renewcommand\given{\SetSymbol[\delimsize]}#1}
\newtheorem{lemma}{Lemma}
\newtheorem{theorem}{Theorem}
\newtheorem{corollary}{Corollary}
\newtheorem{proposition}{Proposition}
\theoremstyle{definition}
\newtheorem{notation}{Notation}
\theoremstyle{remark}
\newtheorem{remark}{Remark}
\begin{document}

\title{Sensitivity of rough differential equations:\\ an approach through the Omega lemma}

\author{Laure Coutin\footnote{Institut de Mathématiques de Toulouse; UMR5219\newline
	Université de Toulouse; UT3\newline
E-mail: \texttt{laure.coutin@math.univ-toulouse.fr}
} \and Antoine Lejay\footnote{
    Université de Lorraine, IECL, UMR 7502, Vand\oe uvre-lès-Nancy, F-54600, France\newline
    CNRS, IECL, UMR 7502, Vand\oe uvre-lès-Nancy, F-54600, France\newline
    Inria, Villers-lès-Nancy, F-54600, France\newline
E-mail: \texttt{Antoine.Lejay@univ-lorraine.fr}
}}
\date{December 12, 2017}

\maketitle

\begin{abstract}
The Itô map gives the solution of a Rough Differential Equation, a generalization
of an Ordinary Differential Equation driven by an irregular path, when existence
and uniqueness hold. By studying how a path is transformed through the vector
field which is integrated, we prove that the Itô map is Hölder or Lipschitz continuous 
with respect to all its parameters. This result unifies and weakens the hypotheses of the 
regularity results already established in the literature. 
\end{abstract}

\noindent\textbf{Keywords: } rough paths; rough differential equations; Itô map; Malliavin 
calculus; flow of diffeomorphisms.

\section{Introduction}

The theory of rough paths is now a standard tool 
to deal with stochastic differential equations (SDE) driven 
by continuous processes other than the Brownian motion
such as the fractional Brownian motion. Even for standard
SDE, it has been proved to be a convenient tool
for dealing with large deviations or for numerical 
purposes. We refer the reader to 
\cites{lejay08b,lejay03a,friz-victoir,lyons06b,lyons02b,gubinelli,coutin12a,friz14a}
for a presentation of this theory with several points of view.
Here, we mainly rely on the notion of \emph{controlled rough path} of M. Gubinelli~\cites{gubinelli,friz14a}.

For a Banach space $\uU$, a time horizon $T>0$,
a regularity indice $p\in[2,3)$ and a control $\omega$ (See Section~\ref{sec:pvar} for a definition),
we denote by 
$\uC_p(\uU)$ the space of paths from $[0,T]$ to $\uU$ of finite $p$-variation with respect to the control $\omega$.
By this, we mean a path $x:[0,T]\to\uU$ 
such that with $\abs{x_t-x_s}\leq C\omega(s,t)^{1/p}$ for some constant $C$.

A \emph{rough path} $\bx$ is an extension in the non-commutative
tensor space $\uT_2(\uU):=1\oplus\uU\oplus(\uU\otimes\uU)$ of a path $x$ in $\uC_p(\uU)$. 
This extension $\bx$ is defined through algebraic and analytic 
properties. It is decomposed as $\bx:=1+\bx^1+\bx^2$ with $\bx^1:=x$ in $\uU$
and $\bx^2\in\uU\otimes\uU$. The increments of $\bx$ are defined by $\bx_{s,t}:=\bx_s^{-1}\otimes \bx_t$. 
It satisfies the multiplicative property $\bx_{r,t}=\bx_{r,s}\otimes\bx_{s,t}$ for any $0\leq r\leq s\leq t\leq T$.
The space  is equipped with the topology induced
by the $p$-variation distance with respect to $\omega$. 
The space of rough paths of finite $p$-variations 
with respect to the control~$\omega$ is denoted by  $\uR_p(\uU)$.
There exists a natural projection from $\uR_p(\uU)$ onto $\uC_p(\uU)$. 
Conversely, a path may be lifted from $\uC_p(\uU)$ to $\uR_p(\uU)$ \cite{lyons-victoir}, 
yet this cannot be done canonically. 

Given a rough path $\bx\in\uR_p(\uU)$
and a vector field $f:\uV\to L(\uU,\uV)$ for a Banach space $\uV$, 
a controlled differential equation 
\begin{equation}
\label{eq-rde1}
y_t=a+\int_0^t f(y_s)\vd\bx_s
\end{equation}
is well defined provided that $f$ is regular enough.
This equation is called a \emph{rough differential equation} (RDE).
For a smooth path $x$, a rough path $\bx$ could be naturally 
constructed using the iterated integrals of $x$. In this case, the solution 
to \eqref{eq-rde1} corresponds to the solution to 
the ordinary differential equation $y_t=a+\int_0^t f(y_s)\vd x_s$.
The theory of rough paths provides us with natural extension of 
controlled differential equations.

When \eqref{eq-rde1} has a unique solution $y\in\uC_p(\uV)$ 
for any $\bx\in\uR_p(\uU)$ given that the vector field $f$ belongs to a proper
subspace $\uFi$, the map $\fI:(a,\bx,f)\mapsto y$ from $\uU\times\uR_p(\uU)\times \uFi$
to $\uC_p(\uV)$ is called the \emph{Itô map}. 
The Itô map is actually locally Lipschitz continuous on $\uU\times\uR_p(\uU)\times\uFi$ 
when $\uFi$ is equipped with the proper topology \cites{lejay1,lejay2,friz-victoir}. 

Together with the Itô map $\fI$, we could consider for each $t\in[0,T]$ 
$\ff_t(a,\bx,f)=\fe_t\circ \fI(a,\bx,f)$ from $\uU\times \uR_p(\uU)\times \uFi$ to $\uV$,
where $\fe_t$ is the \emph{evaluation map} $\fe_t(x)=x(t)$.
The family $\{\ff_t(a,\bx,f)\}_{t\in[0,T]}$ is the \emph{flow} associated to the RDE~\eqref{eq-rde1}.
For ordinary differential equations, the flow defines a family of homeomorphisms or 
diffeomorphisms.

The differentiability properties of the Itô map or the flow are
very important in view of applications.  For SDE, Malliavin calculus opens the door 
to existence of a density and its regularity \cites{nualart-book,malliavin}, 
large deviation results
\cite{inahama1}, Monte Carlo methods \cites{malliavin,gobet},~...

In this article, we then consider the differentiability properties,
understood as Fréchet differentiability, 
of the Itô map under minimal regularity conditions on the vector field. 
Therefore, we extend the current results by proving Hölder continuity
of the Itô map. This generalizes to $2\leq p<3$ the results
of T.~Lyons and X.~Li \cite{li05a}.

Several strategies have been developed to consider the regularity of the Itô map and flows, which 
we review now. For this, we need to introduce some notations. 
\begin{itemize}[noitemsep,leftmargin=0pt,itemindent=1em,topsep=0pt,partopsep=0pt,after={\vspace{\medskipamount}},label={$\star$}]
    \item The space of \emph{geometric rough paths} $\uG_p(\uU)$ is roughly
	described as the limit of the natural lift of smooth rough paths
	through their iterated integrals. 

    \item For a path $\bx\in\uR_p(\uU)$ and a Banach space $\uV$, $\uP_p(\bx,\uV)$ is
	the space of \emph{controlled rough path} (CRP). It contains paths
	$z:[0,T]\to\uV$ whose increments are like those of $\bx$, that is
	$z_{s,t}=\zd_s x_{s,t}+z^\sharp_{s,t}$ for proper
	$(\zd,z^\sharp)$ (for a formal definition, see
	Section~\ref{sec:crp}).  A CRP $z$ is best identified with the pair
	$(z,\zd)$ (See Remark~\ref{rem:crp} in Section~\ref{sec:crp}).

    \item For a path $\bx\in\uR_p(\uU)$ and $h\in\uC_q(\uU)$ with $1/p+1/q>1$
	(which means that $1\leq q<2)$, there exists a natural way to construct
	a rough path $\bx(h)\in\uR_p(\Omega)$ such that
	$\pi(\bx(h))=\pi(\bx)+h$, where $\pi$ is the natural projection from
	$\uT_2(\uU)$ onto $\uU$, and $\bx(0)=\bx$. 
	The map $h\mapsto \bx(h)$ is $\cC^\infty$-Fréchet from $\uC_q(\uU)$
	to $\uR_p(\uU)$.

    \item A vector field $f:\uV\to L(\uU,\uV)$ is said to be
	\emph{$\gamma$-Lipschitz} ($\gamma>0$) if it is differentiable up to
	order $\lfloor\gamma\rfloor$ with a derivative of order $\lfloor
	\gamma\rfloor$ which is $(\gamma-\lfloor \gamma\rfloor)$-Hölder
	continuous (See \cite{friz-victoir}, p.~213). 
\end{itemize}

The flow and differentiability properties have already been dealt with in the following articles:
\begin{itemize}[noitemsep,leftmargin=0pt,itemindent=1em,topsep=0pt,partopsep=0pt,after={\vspace{\medskipamount}},label={$\star$}]
    \item In \cite{lyons97b}, T. Lyons and Z. Qian have 
	studied the flow property for solutions
	to $y_t=a+\int_0^t f(y_s)\vd\bx_s+\int_0^t g(y_s)\vd h_s$
	for a ``regular'' path $h$ subject to a perturbation for $\uV=\RR^d$.

    \item In \cite{lyons-qian}, T. Lyons and Z. Qian 
	showed that the Itô map provides a flow of 
	diffeomorphisms when the driving rough path is geometric for $\uV=\RR^d$.

    \item T. Lyons and Z. Qian \cite{lyons97c} and
	more recently Z. Qian and J.~Tudor \cite{qian}
	have studied the perturbation of the Itô map 
	when the rough path is perturbed by a regular
	path $h$ and the structure of the tangent spaces for finite dimensional Banach space.
       	As these constructions hold in tensor spaces, quadratic terms are involved. They lead
	to rather intricate expressions.

   \item The properties of flow also arise directly from the 
	constructions from almost flows, in the approach 
	from I.~Bailleul \cite{bailleul} in possibly infinite dimensional Banach spaces. 
	Firstly, only Lipschitz flow 
	were considered, but recently in~\cite{bailleul15b}, it was extended to Hölder
	continuous flows. 

    \item In the case $1\leq p<2$, T.~Lyons and Z.~Li proved in \cite{li05a} (see also \cite{lejay3}) that 
	\begin{equation*}
	x\in\uC_p(\uU)\mapsto \fI(a,x,f)\in\uC_p(\uV)
	\text{ is locally $\cC^{k}$-Fréchet differentiable}
	\end{equation*}
	provided that $\uU$ and $\uV$ are finite dimensional Banach spaces
       and $f$ is $\cC^{k+\alpha+\epsilon}$, $k\geq 1$, $\alpha\in(p-1,1-\epsilon)$, $\epsilon\in(0,1)$.

    \item In the book of P.~Friz and N.~Victoir \cite{friz-victoir}, 
    it is proved that 
    \begin{gather*}
    (b,h)\in\uU\times\uC_q(\uU)\mapsto \fI(a+b,\bx(h),f)\in\uC_p(\uU)\\
    \text{ is locally $\cC^k$-Fréchet at }(a,\bx)\in\uU\times\uG_p(\uU)
    \end{gather*}
    provided that $\bx\in\uG_p(\uU)$, $\uU$ is finite-dimensional and $f$
    is of class $\cC^{k-1+\gamma}(\uU,\uV)$
    with $\gamma>p$ and $k\geq 1$
    (See \cite{friz-victoir}*{Theorem~11.6, p.~287}).
    It is also proved that $\ff:[0,T]\times\uU\to \uV$ is a flow 
    of $\cC^k$-diffeomorphisms, and that $\ff$ and its derivatives are
    uniformly continuous with respect to $\bx\in\uG_p(\uU)$
    (See \cite{friz-victoir}*{Section~11.2, p.~289}). 
    Transposed in the context of CRP in \cite{friz14a}, 
    the flow $a\mapsto \ff_t(a,\bx,f)$ is locally a diffeomorphism of class  
    $\cC^{k+1}$ for a vector field $f$ is $\cC^{k+3}$-Fréchet.

\item In a series of articles \cites{inahama1,inahama07b,inahama4,inahama5} (See also \cite{zhang12a}), 
Y. Inahama and H. Kawabi have studied various aspects of stochastic Taylor developments in $\epsilon$ of solutions
to 
\begin{equation}
    \label{eq:ik:1}
y^\epsilon_t=a+\int_0^t f(y^\epsilon_s)\vd \fd_\epsilon\bx_s +\int_0^t g(y_s^\epsilon)\vd h_s
\end{equation}
around the solutions to $z_t=a+\int_0^t g(z_s)\vd h_s$, when 
$\bx\in\uR_p(\uW)$, $h\in\uC_q(\uU)$ with $1/p+1/q>1$, $f:\uV\to L(\uW,\uV)$, $g:\uV\to L(\uU,\uV)$
and $\fd_\epsilon:\uR_p(\uW)\to\uR_p(\uW)$ is the dilatation operator defined by $\fd_\epsilon\bx:=1+\epsilon\bx^1+\epsilon^2\bx^2$.
Up to the natural injection of $\bx$ to $\uR_p(\uW\oplus\uU)$, $h$ to $\uC_p(\uW\oplus\uU)$, 
$f:\uV\to L(\uW\oplus\uU,\uV)$ and $g:\uV\to L(\uW\oplus\uU,\uV)$, \eqref{eq:ik:1} is recast as 
\begin{equation*}
y^\epsilon=\fI(a,\fd_\epsilon\bx(h),f+g).
\end{equation*}

\item Using a Banach space version of the Implicit Functions Theorem, 
    I. Bailleul recently proved in \cite{bailleul15} that 
    \begin{gather*}
    (z,f)\in\uP_p(\bb,\uU)\times \cC^k(\uU,\uV)\mapsto \fI(a,\fP_\bb(z),f)\in\uP_p(\bb,\uV)\\
    \text{is $\cC^{\lfloor k\rfloor-2}$-Fréchet differentiable}
    \end{gather*}
    provided that $\bb\in\uG_p(\uB)$ and $k\geq 3$, where $\fP_\bb(z)$ 
    lifts $z\in\uP_p(\bb,\uU)$ to a geometric rough path. 

\item SDE driven by fractional Brownian motion and Gaussian processes attracted
    a lot of attention
    \cites{cass3,inahama2,marie,cass,besalu,decreusefond,baudoin2,baudoin3,baudoin4,hu,leon,deya,chronopoulou,nualart,inhama8,baudoin5}.
    Since integrability is the key to derive some integration by parts formula
    in Malliavin type calculus, several articles deal with moments estimates
    for solutions to linear equations driven by Gaussian rough paths
    \cites{baudoin,inahama3,friz1,unterberger,cass2,hairer}. 
\end{itemize} 

In this article, 
\begin{itemize}[noitemsep,leftmargin=0pt,itemindent=1em,topsep=0pt,partopsep=0pt,after={\vspace{\medskipamount}},label={$\star$},itemsep=\smallskipamount]

    \item We establish that the Itô map $\fI$ is locally of class $\cC^{\gamma-\epsilon}$ 
	for all the types of perturbations of the driving rough path $\bx$ seen above. 
	We then generalized the results in \cite{bailleul15}, 
	\cite{friz14a}*{Sect.~8.4} or \cite{friz-victoir}
	by providing the Hölder regularity, and not only differentiability.
	At the exception of  the work of T.~Lyons and X.D.~Li~\cite{li05a} 
	for the Young case ($1\leq p<2$), at the best of our knowledge, none of the works cited
	above deal with the Hölder regularity of the derivatives of the Itô map.
       The cited results are proved under stronger regularity conditions than ours on the vector field. 
       In~\cite{bailleul15}, the starting point is kept fixed. 
	While in~\cites{friz14a,friz-victoir}, only the regularity with respect
	to the starting point and perturbation of the form $\bx(h)$ of the driver are considered, 
	as in~\cite{li05a}.
	Unlike~\cite{friz-victoir}, the chain rule may be applied as 
	our solutions are constructed as CRP.

    \item By adding more flexibility in the notion of CRP,
	we define a bilinear continuous integral with CRP 
	both as integrand and integrators.
	This simple trick allows one to focus on the effect of a non-linear
	function applied to a CRP and weaken the regularity assumptions
	imposed on the vector fields in~\cites{friz14a,bailleul15}.

    \item Following the approach of \cite{abraham}, we provide
	a version of the so-called \emph{Omega lemma} for paths
	of finite $p$-variations with $1\leq p<2$ and for CRP (for $2\leq p<3$). 
	For a function~$f$ of given regularity, this lemma states
	the regularity of the map $\fO f:y\mapsto \Set{f(y_t)}_{t\in[0,T]}$ when $y$ is a path 
	of finite $p$-variation or a CRP on $[0,T]$. 
	When dealing with continuous paths with the sup-norms, where $\fO f$ has
	the regularity of $f$ (for functions of class~$\cC^k$, but also Hölder or Sobolev).
	When dealing with paths of finite $p$-variation, the situation is more cumbersome. 
        This explains the losses in the regularity observed in \cite{li05a}.

    \item We provide a ``genuine rough path'' approach which removes the restriction 
	implied by smooth rough paths, the restriction to geometric rough paths (which could 
	be dealt otherwise with $(p,p/2)$-rough paths~\cite{lejay-victoir}) as well as any 
	restriction on the dimensions of the Banach spaces~$\uU$ and~$\uV$.
	Although we use a CRP, the Duhamel formula shown in \cite{coutin} could
	    serve to prove a similar result for (partial) rough paths and not
	only CRP. 

    \item We exemplify the difference between the ``rough situation''
	and the smooth one. For ordinary differential equation, the spirit
	of the Omega lemma together with the Implicit Functions Theorem 
	is that the regularity of the vector field is transported 
	into the regularity of the Itô map. Once the Omega lemma
	is stated for our spaces of paths, its implication on the regularity
       of the Itô map is immediate.
\end{itemize}

\section{Notations}

Througout all the article, we denote by $\uU$, $\uV$ and $\uW$ Banach spaces. 

For two such Banach spaces $\uU$ and $\uV$, we denote by $L(\uU,\uV)$ the set of linear \emph{continuous}
maps from $\uU$ to $\uV$. If $A\in L(\uU,\uV)$ is invertible, then its inverse $A^{-1}$ is itself continuous
and thus belongs to $L(\uV,\uU)$.

For a functional $\fF$ from $\uU$ to $\uV$, we denote by $\dD\fF$
its Fréchet derivative, which is a map from $\uV$ to $L(\uV,\uU)$, and by $\dD_y\fF$
its Fréchet derivative in the direction of a variable $y$
in $\uU$.

For a function $f$ and some $\alpha\in(0,1]$, we denote by $H_\alpha(f)$
its \emph{$\alpha$-Hölder semi-norm} $H_\alpha(f):=\sup_{x\not=y} \abs{f(x)-f(y)}/\abs{x-y}^\alpha$.

For any $\alpha>0$, we denote by $\cC^{\alpha}(U,\uV)$  the set of continuous, bounded functions
from~$U$ to~$\uV$ with, bounded continuous (Fréchet) derivatives up to order 
$k:=\lfloor \alpha\rfloor$, 
and its derivative of order $k$ is $(\alpha-k)$-Hölder continuous.
A function in $\cC^{\alpha}(U,\uV)$ is simply said to be of class $\cC^\alpha$ (in this article, we restrict
ourselves to non-integer values of $\alpha$).

With the convention that $\dD^0f:=f$, we write for $f$ in $\cC^{\alpha}(U,\uV)$, 
\begin{equation*}
    \normfi{\alpha}{f}:=\max_{j=0,\dotsc,k}\Set*{\normsup{\dD^j f},\normhold{\alpha-k}{\dD^k f}}.
\end{equation*}

Our result are stated for bounded functions $f$. For proving existence and
uniqueness, this boundedness condition may be relaxed by keeping only the
boundedness of the derivatives of $f$ (See \textit{e.g.},
\cites{lejay1,friz-victoir,lejay2},~...).  Once existence is proved under this
linear growth condition, there is no problem in assuming that $f$ itself is
bounded since we only use estimates locally.  This justifies our choice for the
sake of simplicity.

\section{The Implicit Functions Theorem}

Let us consider two Banach spaces $\uP$ and $\Lambda$  as
well as a functional $\fF$ from $\uP\times\Lambda$ to $\uP$.
Here, $\uP$ plays the role of the spaces of paths, while
$\Lambda$ is the space of parameters of the equation. 

We consider first solutions $y$ to the fixed point problem 
\begin{equation}
    \label{eq:121}
y=\fF(y,\lambda)+b,\ (\lambda,b)\in\Lambda\times\uP.
\end{equation}
This is an abstract way to consider equations of type
$y_t=a+\int_0^t f(y_s)\vd\bx_s+b_t$, whose parameters are $\lambda=(a,f,\bx)\in\Lambda$
and $b\in\uP$.

To ensure uniqueness of the solutions to \eqref{eq:121}, we slightly change the problem.
We assume that $\uP$ contains a Banach sub-space $\uPs$,
typically, the paths that start from $0$.
As $\uPs$ is stable under addition, we consider the 
quotient space~$\uPsim:=\uP/\uPs$ defined for the equivalence relation $x\sim y$
when~$x-y\in\uPs$. This quotient is nothing more than a way to encode the starting point. 

We now consider instead of \eqref{eq:121} the problem 
\begin{equation}
    \label{eq:122}
y^*=\fF(y^*+z,\lambda)-z+b,\ b\in\uPs,\ z\in\uPsim,\ \lambda\in\Lambda.
\end{equation}
There is clearly no problem in restricting $b$ to $\uPs$, 
since otherwise one has to change $\lambda$ and $z$ accordingly. Solving \eqref{eq:122}
implies that \eqref{eq:121} is solved for~$y=y^\star+z$.

For an integer $k\geq 0$ and $0<\alpha\leq 1$,
if $\fG(y^*,z,\lambda):=\fF(y^*+z,\lambda)$ and $\fF$ is of class $\cC^{k+\alpha}$ with respect to $(y,\lambda)$, 
then $\fG$ is of class $\cC^{k+\alpha}$ with respect to~$(y^*,z,\lambda)$

The reason for considering \eqref{eq:122} instead of~\eqref{eq:121}
is that for the cases we consider, $\fG$ will be strictly contractive
in $y^*$, ensuring the existence of the unique solution to~\eqref{eq:122}.

We use the following version of the Implicit Functions Theorem
(See \textit{e.g.}~\cite{abraham}*{\S~2.5.7, p.~121} for a $\cC^k$-version
of the Implicit Functions Theorem\footnote{To extend it to $\cC^{k+\alpha}$-Hölder
continuous functions, we have just to note that the derivative 
of $(b,z,\lambda)\mapsto (\fH(b,z,\lambda),z,\lambda)$ below, which is given by the inverse function theorem, 
is the composition of $z\mapsto z^{-1}$, which 
is $\cC^{\infty}$, $\fG$, which is~$\cC^{k-1+\alpha}$ and $\fH$ which
is~$\cC^{k}$, see~\cite{norton}).}.

\begin{theorem}[Implicit Functions Theorem]
    \label{thm:implicit}
    Let us assume that 
    \begin{enumerate}[leftmargin=2em,label={\roman*)}]
	\item The map 
    $\fG(y^*,z,\lambda):=\fF(y^*+z,\lambda)$ is of class $\cC^{k+\alpha}$
    from $\uX:=\uPs\times \uPsim\times\Lambda$ to $\uPs$
   for $k\geq 1$, $0<\alpha\leq 1$ with respect to $(y^*,z,\lambda)\in\uX$. 

   \item For some $(\widehat{y}^*,\widehat{z},\widehat{\lambda})\in\uX$ and any $b^*\in\uPs$, 
       the exists a unique solution $h^*$ in $\uPs$ to 
       \begin{equation*}
       h^*=\dD_{y^*}\fG(\widehat{y}^*,\widehat{z},\widehat{\lambda})(h^*)+b^*
       \end{equation*}
       with $\normP{h^*}\leq C\normP{b^*}$ for some constant $C\geq 0$. 
       This means that $\Id-\dD_{\widehat{y}}\fG(\cdot,\widehat{z},\widehat{\lambda})$
       is invertible from $\uPs$ to $\uPs$ with a bounded inverse.
   \end{enumerate}
   Then there exists a neighborhood $U$ of $(\widehat{z},\widehat{\lambda})\in\uPsim\times\Lambda$,
   a neighborhood $V$ of $\fG(\widehat{y}^*,\widehat{z},\widehat{\lambda})$, as well as a unique
   map $\fH$ from $V\times U$ to $\uPs$ which solves 
   \begin{equation*}
       \fH(b,z,\lambda)=\fG(\fH(b,z,\lambda),z,\lambda)+b,\ \forall (b,z,\lambda)\in V\times U.
   \end{equation*}
   In other words, $\fI(b,z,\lambda):=z+\fH(b,z,\lambda)$ is locally the solution 
   in $z+\uPs$ to $\fI(b,z,\lambda)=-z+\fF(\fI(b,z,\lambda),\lambda)+b$.
\end{theorem}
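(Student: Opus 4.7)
The plan is to reduce the statement to the standard $\cC^k$-version of the Implicit Functions Theorem (as invoked via the reference to \cite{abraham}) and then to bootstrap the extra Hölder regularity along the lines suggested by the footnote.

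First I would introduce the auxiliary map $\Phi:\uPs\times\uPs\times\uPsim\times\Lambda\to\uPs$ defined by
\begin{equation*}
\Phi(y^*,b,z,\lambda):=y^*-\fG(y^*,z,\lambda)-b.
\end{equation*}
Setting $\widehat{b}:=\widehat{y}^*-\fG(\widehat{y}^*,\widehat{z},\widehat{\lambda})$, hypothesis~i) ensures that $\Phi$ is of class $\cC^{k+\alpha}$ on $\uX\times\uPs$ and vanishes at $(\widehat{y}^*,\widehat{b},\widehat{z},\widehat{\lambda})$, while hypothesis~ii) asserts precisely that $\dD_{y^*}\Phi(\widehat{y}^*,\widehat{b},\widehat{z},\widehat{\lambda})=\Id-\dD_{y^*}\fG(\widehat{y}^*,\widehat{z},\widehat{\lambda})$ is a linear homeomorphism of $\uPs$ with $\normop{(\Id-\dD_{y^*}\fG)^{-1}}\leq C$. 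The classical $\cC^k$ Implicit Functions Theorem in Banach spaces (\cite{abraham}*{\S 2.5.7}) then yields neighborhoods $V$ of $\widehat{b}$ (which plays the role of a neighborhood of $\fG(\widehat{y}^*,\widehat{z},\widehat{\lambda})$ after translation) and $U$ of $(\widehat{z},\widehat{\lambda})$, together with a unique $\cC^k$ map $\fH:V\times U\to\uPs$ satisfying $\Phi(\fH(b,z,\lambda),b,z,\lambda)=0$, i.e.\
\begin{equation*}
\fH(b,z,\lambda)=\fG(\fH(b,z,\lambda),z,\lambda)+b.
\end{equation*}

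To upgrade $\fH$ from $\cC^k$ to $\cC^{k+\alpha}$, I would differentiate the above identity in $(b,z,\lambda)$ and solve the resulting linear relation for $\dD\fH$: at each point of $V\times U$,
\begin{equation*}
\dD\fH=\bigl(\Id-\dD_{y^*}\fG(\fH(\cdot),\cdot)\bigr)^{-1}\circ\bigl(\dD_{(z,\lambda)}\fG(\fH(\cdot),\cdot)+\Id_{\uPs}\bigr),
\end{equation*}
where $\Id_{\uPs}$ stands for the component with respect to the $b$-variable. Because the inversion $A\mapsto A^{-1}$ on the open set of invertible operators is $\cC^\infty$ (Neumann series), because $\fG$ is $\cC^{k+\alpha}$ so that $\dD_{y^*}\fG$ and $\dD_{(z,\lambda)}\fG$ are $\cC^{k-1+\alpha}$, and because $\fH$ is already $\cC^k$, the Faà di Bruno expansion of iterated derivatives of a composition shows that $\dD\fH$ is $\cC^{k-1+\alpha}$; hence $\fH$ itself is $\cC^{k+\alpha}$. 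This is exactly the bootstrap outlined in the footnote.

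The conclusion about $\fI(b,z,\lambda):=z+\fH(b,z,\lambda)$ follows by substituting $y=y^*+z$ into~\eqref{eq:122} and noting that $\fI$ inherits the $\cC^{k+\alpha}$-regularity of $\fH$. I expect the only delicate step to be the Hölder-bootstrap in the last paragraph: one must keep careful track of which factor in the iterated chain-rule expansion carries the Hölder seminorm, and verify that composing a $\cC^{k-1+\alpha}$ function with the $\cC^k$ map $\fH$ preserves the $\cC^{k-1+\alpha}$ class. The remaining constructions—the definition of $\Phi$ and the invocation of the classical IFT—are routine.
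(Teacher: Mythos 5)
Your proposal is correct and follows essentially the same route as the paper, which does not write out a proof but instead cites the classical $\cC^k$ Implicit Functions Theorem of \cite{abraham}*{\S~2.5.7} and sketches, in a footnote, exactly the Hölder bootstrap you carry out (the derivative of the solution map is the composition of the $\cC^\infty$ inversion $A\mapsto A^{-1}$, the $\cC^{k-1+\alpha}$ map $\dD\fG$, and the $\cC^k$ map $\fH$). Your write-up simply makes explicit the auxiliary map $\Phi$ and the chain-rule identity for $\dD\fH$ that the authors leave implicit.
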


\begin{remark}
    \label{rem:implicit}
Actually, we do not use this theorem in this form. We show 
that~$\dD_y \fG(\cdot,z,\lambda)$ is contractive when restricted
to a bounded, closed, convex set~$C$ of $\uPs\times\uPsim\times\Lambda$, and only 
on a time interval $\tau$ which is small enough, in function 
of the radius of $C$. The controls we get allow us to solve iteratively the equations on abutting time intervals
$\tau_i$ and to ``stack them up'' to get the result on
any finite time interval (and even globally for suitable vector fields).
As for this, we have only to re-use with slight adaptations 
what is already largely been done, we do not treat these issues. 
\end{remark}


\section{The Omega lemma for paths of finite $p$-variation}

\label{sec:pvar}

We consider a time horizon $T>0$. A \emph{control} $\omega$
is a non-negative function defined on sub-intervals $[s,t]\subset[0,T]$ 
which is super-additive and continuous close to the diagonal $\Set{(t,t)\given t\in[0,T]}$.
This means that is $\omega_{r,s}+\omega_{s,t}\leq \omega_{r,t}$
for $0\leq r\leq s\leq t\leq T$.

For a path $x$ from $[0,T]$ to $\uV$, we set $x_{s,t}:=x_{[s,t]}:=x_t-x_s$.
For some~$p\geq 1$, a path $x$ of finite $p$-variation controlled by $\omega$
satisfies
\begin{equation*}
    \normp{x}:=\sup_{\substack{[s,t]\subset[0,T]\\s\not= t}}\frac{\abs{x_t-x_s}}{\omega_{s,t}^{1/p}}.
\end{equation*}
We denote by $\uC_p(\uV)$ the space of such paths, which is a Banach 
space with the norm 
\begin{equation*}
    \normfp{x}:=\abs{x_{0}}+\normp{x}.
\end{equation*}
The space $\uC_p(\uV)$ is continuously embedded in the space
of continuous functions~$\uC(\uV)$ with the sup-norm $\normsup{\cdot}$, with 
\begin{equation}
    \label{eq:70}
    \normsup{x}\leq \abs{x_{0}}+\normp{x}\omega_{0,T}^{1/p}.
\end{equation}
For any $q\geq p$, $\uC_p(\uV)$ is also continuously embedded
in~$\uC_q(\uV)$.

We call a \emph{universal constant} a constant that depends
only on $\omega_{0,T}$ and the parameters $p$, $q$, $\kappa$, $\gamma$,  ...
that will appear later.

\begin{proposition}[{L.C. Young \cite{young36a}}] 
    \label{prop:young}
    Let $p,q\geq 1$ such that $1/p+1/q>1$.
	There exists a unique continuous, bilinear map 
	\begin{equation*}
	    \begin{aligned}
	    \uC_p(\uU)\times\uC_q(L(\uV,\uU))&\to \uC_p(\uV)\\
		(x,y)&\mapsto \int_0^\cdot y\vd x
	    \end{aligned}
	\end{equation*}
	which satisfies $\int_0^0 y_r\vd x_r=0$  for any $(x,y)$ and any $[s,t]\subset[0,T]$,
	\begin{gather}
	    \label{eq:115}
	    \abs*{\int_s^t y_r\vd x_r-y_sx_{s,t}}
	    \leq K\normq{y}\normp{x}\omega_{s,t}^{\frac{1}{p} + \frac{1}{q}}
	\end{gather}
	for some universal constant $K$. 
\end{proposition}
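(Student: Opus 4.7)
The plan is to follow Young's classical construction of the integral as a limit of Riemann sums, and to derive \eqref{eq:115} as part of that construction. For $[s,t]\subset[0,T]$ and a partition $\Pi=\{s=t_0<t_1<\cdots<t_n=t\}$ of $[s,t]$, I consider the Riemann sum
\[
S(\Pi):=\sum_{i=0}^{n-1} y_{t_i}\,x_{t_i,t_{i+1}}
\]
and show that $S(\Pi)$ admits a limit as the mesh of $\Pi$ tends to $0$; this limit defines $\int_s^t y\vd x$. Once this is in place, \eqref{eq:115}, the finite $p$-variation of the primitive, bilinearity, continuity, and uniqueness all follow.

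The heart of the argument is the \emph{remove-a-point} estimate. If $\Pi'$ is obtained from $\Pi$ by deleting a single interior point $t_j$, a direct computation gives
\[
S(\Pi)-S(\Pi')=(y_{t_j}-y_{t_{j-1}})\,x_{t_j,t_{j+1}},
\]
whose norm is at most $\normq{y}\normp{x}\,\omega_{t_{j-1},t_{j+1}}^{1/q+1/p}$ by the $q$- and $p$-variation controls. Splitting the indices $j\in\{1,\dots,n-1\}$ by parity into two families of non-overlapping subintervals and applying super-additivity of $\omega$ to each family yields $\sum_{j=1}^{n-1}\omega_{t_{j-1},t_{j+1}}\le 2\omega_{s,t}$; by pigeonhole there exists $j$ with $\omega_{t_{j-1},t_{j+1}}\le 2\omega_{s,t}/(n-1)$. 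Iteratively deleting points, always at the current optimal $j$, reduces $\Pi$ to the trivial partition $\{s,t\}$ (for which $S=y_s x_{s,t}$) in $n-1$ steps. Setting $\theta:=1/p+1/q>1$, the accumulated cost is
\[
\abs{S(\Pi)-y_s x_{s,t}}\le \normq{y}\normp{x}\,(2\omega_{s,t})^\theta\sum_{k=1}^{n-1}k^{-\theta}\le C_\theta\,\normq{y}\normp{x}\,\omega_{s,t}^\theta,
\]
the series converging precisely because $\theta>1$.

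To obtain convergence, I compare two arbitrary partitions via their common refinement through the same estimate, producing a Cauchy sequence; the limit $I_{s,t}$ satisfies \eqref{eq:115} by passing to the limit. Additivity $I_{s,t}=I_{s,u}+I_{u,t}$ is built into the construction (any partition may be taken through $u$), and combining it with \eqref{eq:115} shows that $t\mapsto I_{0,t}$ lies in $\uC_p(\uV)$. Bilinearity and continuity of $(x,y)\mapsto I$ follow from those of the Riemann sums together with the uniform bound \eqref{eq:115}. For uniqueness, if $J$ is another continuous bilinear map vanishing on degenerate intervals and satisfying \eqref{eq:115}, then additivity gives $\abs{I_{s,t}-J_{s,t}}\le 2K\normq{y}\normp{x}\,\omega_{s,t}^\theta$; summing over a partition of $[0,T]$ yields $\sum_i\omega_{t_i,t_{i+1}}^\theta\le(\max_i\omega_{t_i,t_{i+1}})^{\theta-1}\omega_{0,T}\to 0$ as the mesh shrinks, forcing $I=J$.

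The main obstacle is the bookkeeping in the iterative remove-a-point procedure: after $k$ deletions, one must verify that super-additivity still produces a point whose excision costs $O((n-k-1)^{-\theta})$, so that the geometric-type sum $\sum k^{-\theta}$ controls everything uniformly in the initial partition. This is precisely where the hypothesis $1/p+1/q>1$ is essential — neither weaker nor replaceable. Once this core estimate is secured, the remaining steps (Cauchy convergence, $p$-variation bound, bilinearity, uniqueness) are routine.
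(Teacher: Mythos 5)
Your argument is correct, and it is worth noting that the paper does not prove this proposition at all: it is stated as a classical result and attributed to Young's 1936 paper, while the analogous integral for controlled rough paths (Proposition~5) is obtained by feeding the germ $Y_{s,t}:=y_sz_{s,t}+\yd_s z^\flat_{s,t}$ into the Additive Sewing Lemma of Feyel--de~La~Pradelle--Mokobodzki. Your proof is the original Young--L\'oeve route: the remove-a-point identity $S(\Pi)-S(\Pi')=(y_{t_j}-y_{t_{j-1}})x_{t_j,t_{j+1}}$, the parity splitting giving $\sum_j\omega_{t_{j-1},t_{j+1}}\le 2\omega_{s,t}$, the pigeonhole choice of $j$, and the convergent series $\sum k^{-\theta}$ with $\theta=1/p+1/q>1$; the comparison of two partitions through their common refinement, the additivity, the $p$-variation bound, and the uniqueness argument via $\sum_i\omega_{t_i,t_{i+1}}^\theta\le(\max_i\omega_{t_i,t_{i+1}})^{\theta-1}\omega_{0,T}$ are all sound (the last step using the continuity of $\omega$ near the diagonal, which is part of the paper's definition of a control). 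The sewing-lemma route would shorten your argument considerably: the germ $Y_{s,t}:=y_sx_{s,t}$ has coherence defect $Y_{s,u}+Y_{u,t}-Y_{s,t}=-y_{s,u}x_{u,t}$, bounded by $\normq{y}\normp{x}\omega_{s,t}^{\theta}$, so existence, uniqueness and the estimate \eqref{eq:115} all come in one stroke from a lemma the paper already invokes elsewhere; what your elementary construction buys in exchange is self-containedness and the explicit identification of the integral as the limit of Riemann sums, which the abstract sewing statement does not give directly.
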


For some $\kappa\in[0,1]$, we set $\overline{\kappa}:=1-\kappa$.

\begin{lemma}
\label{lem-1}
Let $g\in\cC^\gamma(\uV,\uW)$. Then for any $\kappa\in[0,1]$ and $\gamma\in[0,1]$, 
\begin{multline}
\label{eq-20}
\abs{g(z)-g(y)-g(z')+g(y')}\\
\leq \normhold{\gamma}{g}(\abs{y'-y}^{\kappa\gamma}+\abs{z'-z}^{\kappa\gamma})
(\abs{z'-y'}^{\gamma\overline{\kappa}}+\abs{z-y}^{\gamma\overline{\kappa}})
\end{multline}
for all $y,z,y',z'\in\uV$.
\end{lemma}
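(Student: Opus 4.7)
The plan is to bound $A := |g(z) - g(y) - g(z') + g(y')|$ in two different ways by grouping the four evaluations of $g$ differently, and then interpolate between the two bounds via the identity $A = A^{\kappa} \cdot A^{\overline{\kappa}}$.

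First, by grouping $(g(z) - g(z'))$ together with $(g(y') - g(y))$ and applying the $\gamma$-Hölder estimate to each pair, one obtains
\begin{equation*}
A \leq \normhold{\gamma}{g}\bigl(|z - z'|^{\gamma} + |y - y'|^{\gamma}\bigr).
\end{equation*}
Second, by grouping $(g(z) - g(y))$ together with $(g(y') - g(z'))$, one likewise gets
\begin{equation*}
A \leq \normhold{\gamma}{g}\bigl(|z - y|^{\gamma} + |z' - y'|^{\gamma}\bigr).
\end{equation*}

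Next, writing $A = A^{\kappa} A^{\overline{\kappa}}$ and raising the first estimate to the power $\kappa$ and the second to the power $\overline{\kappa}$ yields
\begin{equation*}
A \leq \normhold{\gamma}{g}\bigl(|z - z'|^{\gamma} + |y - y'|^{\gamma}\bigr)^{\kappa}\bigl(|z - y|^{\gamma} + |z' - y'|^{\gamma}\bigr)^{\overline{\kappa}}.
\end{equation*}
Since $\kappa,\overline{\kappa} \in [0,1]$, the elementary subadditivity inequality $(a + b)^{\theta} \leq a^{\theta} + b^{\theta}$ valid for $a,b \geq 0$ and $\theta \in [0,1]$ distributes the exponents inside each factor, giving exactly the claimed bound \eqref{eq-20}.

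The only mildly delicate point is being careful that both bounds are honest consequences of the Hölder semi-norm (and not, e.g., of a mean value bound which would require $\gamma = 1$); since $\gamma \in [0,1]$ and $\normhold{\gamma}{g}$ is precisely defined as the $\gamma$-Hölder seminorm, this works uniformly. The proof is otherwise a clean interpolation argument, with no technical obstacle.
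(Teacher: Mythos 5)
Your proof is correct and follows essentially the same route as the paper's: two Hölder bounds obtained from the two natural pairings of the four terms, geometric interpolation via $A=A^{\kappa}A^{\overline{\kappa}}$, and the subadditivity $(a+b)^{\theta}\le a^{\theta}+b^{\theta}$ for $\theta\in[0,1]$ to distribute the exponents. The paper phrases the second bound as "inverting the roles of $z'$ and $y$," which is exactly your second grouping, so there is no substantive difference.
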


\begin{proof} First, 
    \begin{equation*}
\abs{g(z)-g(y)-g(z')+g(y')}
	\leq \normhold{\gamma}{g}(\abs{y'-y}^\gamma+\abs{z'-z}^\gamma).
    \end{equation*}
    By inverting the roles of $z'$ and $y$ in the above equation, we get a similar inequality. 
    Choosing $\kappa\in[0,1]$ and raising the first inequality to power $\kappa$
    and the second one to power $\overline{\kappa}$ leads to the result.
\end{proof}

We now fix $p\geq 1$, $\kappa\in(0,1)$ and $\gamma\in(0,1]$. 
We set $q:=p/\kappa\gamma$. We define
\begin{equation*}
    \uCs_p(\uV):=\Set{y\in\uC_p(\uV)\given y_{0}=0}
    \text{ and }\uCs_q(\uV):=\Set{y\in\uC_q(\uV)\given y_{0}=0}.
\end{equation*}

An immediate consequence of this lemma is that for $\kappa\in(0,1)$, $\gamma\in(0,1]$,
the map $\fO g:y\mapsto \Set{g(y_t)}_{t\in[0,T]}$ is $\gamma\overline{\kappa}$-Hölder continuous
from $\uCs_p(\uV)$ to $\uCs_{p/\kappa\gamma}$ with Hölder constant 
$2^\gamma\normhold{\gamma}{g}\omega_{0,T}^{\gamma\overline{\kappa}/p}$
when $g$ is $\gamma$-Hölder continuous.

We now consider the case of higher differentiability of $g$.

We now give an alternative proof of the one 
in \cite{li05a}*{Theorems~2.13 and~2.15}, which  mostly differs in the use
of the converse of Taylor's theorem.
For CRP in Section~\ref{sec:crp}, the proof of Proposition~\ref{prop:om:crp}
will be modelled on this one.

We still use the terminology of~\cite{abraham} regarding the
Omega lemma. The Omega operator~$\fO$ transforms a function 
$f$ between two Banach spaces $\uU$ and~$\uV$ to a function
mapping continuous paths from $[0,T]$ to $\uU$ 
to continuous paths from $[0,T]$ to $\uV$. The idea is then 
to study the regularity of $\fO f$  in function of 
the regularity of $f$ and the one of the paths that 
are carried by $\fO f$. The difference with the results 
in \cite{abraham} is that we use the $p$-variation norm 
instead of the sup-norm. This leads to a slight loss of regularity
when transforming $f$ to $\fO f$.

\begin{proposition}[The Omega lemma for paths of finite $p$-variation]
    \label{prop:om:fpv}
    For $p\geq 1$, $k\geq 1$, $\gamma\in(0,1]$, $\kappa\in(0,1)$ 
and $f$ of class $\cC^{k+\gamma}$ from $\uV$ to $\uW:=L(\uU,\uV)$,
$\fO f(y):=(f(y_t))_{t\in[0,T]}$ 
   is of class $\cC^{k+\overline{\kappa}\gamma}$ from any ball of radius $\rho>0$ of $\uC_p(\uV)$
   to $\uC_q(\uW)$ with $q:=p/\kappa\gamma$.
   Besides, $\dD_y\fO f\cdot h=(\dD f(y_t)\cdot h_t)_{t\in[0,T]}\in\uC_q(\uW)$
   for any $y,h\in\uC_p(\uV)$. 
   Finally, $\dD\fO(y)\cdot h\in\uCs_q(\uW)$ when $h\in\uCs_p(\uV)$.
\end{proposition}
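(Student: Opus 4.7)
My approach is to establish a Taylor-type expansion of $\fO f$ around each $y$ in the ball of radius $\rho$, measured in the $q$-variation norm, and to invoke the Hölder version of the converse of Taylor's theorem (as in \cite{abraham}, \cite{norton}) to identify the derivatives and conclude the regularity in one shot. The natural candidate for the $j$-th derivative at $y$ is the $j$-linear map
\begin{equation*}
T_y^{(j)}(h^{(1)},\dots,h^{(j)}):=\bigl(\dD^j f(y_t)(h^{(1)}_t,\dots,h^{(j)}_t)\bigr)_{t\in[0,T]},\qquad j=0,\dots,k.
\end{equation*}
A preliminary step is to verify that each $T_y^{(j)}$ maps $\uC_p(\uV)^j$ continuously into $\uC_q(\uW)$ and depends locally Hölder-continuously on~$y$. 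Writing the increment of $T_y^{(j)}(h^{(1)},\dots,h^{(j)})$ between $s$ and $t$ as a telescoping sum, the piece involving $\dD^j f(y_t)-\dD^j f(y_s)$ is controlled by the Hölder or Lipschitz continuity of $\dD^j f$ inherited from $f\in\cC^{k+\gamma}$, producing a factor $\abs{y_t-y_s}^{\kappa\gamma}\leq\normp{y}^{\kappa\gamma}\omega_{s,t}^{1/q}$; the remaining single-increment terms give $\omega_{s,t}^{1/p}\leq\omega_{0,T}^{1/p-1/q}\omega_{s,t}^{1/q}$.

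The core step is the pointwise integral Taylor formula
\begin{equation*}
R_t(y,h):=f(y_t+h_t)-\sum_{j=0}^k\tfrac{1}{j!}\dD^j f(y_t)h_t^{\otimes j}=\int_0^1\tfrac{(1-u)^{k-1}}{(k-1)!}\bigl[\dD^k f(y_t+uh_t)-\dD^k f(y_t)\bigr]h_t^{\otimes k}\vd u,
\end{equation*}
for which $\abs{R_0(y,h)}\leq\normhold{\gamma}{\dD^k f}\abs{h_0}^{k+\gamma}$. To estimate the $q$-variation increments $R_t-R_s$, I split the integrand pointwise as
\begin{align*}
&\bigl[\dD^k f(y_t+uh_t)-\dD^k f(y_t)\bigr]h_t^{\otimes k}-\bigl[\dD^k f(y_s+uh_s)-\dD^k f(y_s)\bigr]h_s^{\otimes k}\\
&\quad=\bigl[\dD^k f(y_t+uh_t)-\dD^k f(y_t)-\dD^k f(y_s+uh_s)+\dD^k f(y_s)\bigr]h_t^{\otimes k}\\
&\qquad+\bigl[\dD^k f(y_s+uh_s)-\dD^k f(y_s)\bigr]\bigl(h_t^{\otimes k}-h_s^{\otimes k}\bigr),
\end{align*}
and apply Lemma~\ref{lem-1} to $g=\dD^k f$ at the four points $(y_s,y_s+uh_s,y_t,y_t+uh_t)$ to bound the first bracket by $\normhold{\gamma}{\dD^k f}(\abs{y_t-y_s}^{\kappa\gamma}+\abs{y_t-y_s+u(h_t-h_s)}^{\kappa\gamma})(\abs{uh_t}^{\gamma\okappa}+\abs{uh_s}^{\gamma\okappa})$; combined with $\abs{h_t}^k\leq\normsup{h}^k$ and \eqref{eq:70}, this yields $C(\rho)\normfp{h}^{k+\gamma\okappa}\omega_{s,t}^{1/q}$. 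The second bracket is handled by the $\gamma$-Hölder bound on $\dD^k f$ and a telescoping of $h_t^{\otimes k}-h_s^{\otimes k}$, giving $C\normfp{h}^{k+\gamma}\omega_{s,t}^{1/p}$, which is absorbed into $C(\rho)\normfp{h}^{k+\gamma\okappa}\omega_{s,t}^{1/q}$ via $\omega_{0,T}^{1/p-1/q}$ and the bounded radius.

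Combining these bounds gives $\normfq{R(y,h)}\leq C(\rho)\normfp{h}^{k+\gamma\okappa}$. Together with the continuity and local Hölder property of the $T_y^{(j)}$ from Step~1, the Hölder converse of Taylor's theorem delivers $\fO f\in\cC^{k+\gamma\okappa}$ on the ball of radius~$\rho$ with $\dD^j\fO f(y)=T_y^{(j)}$; in particular the claimed formula for $\dD\fO f(y)\cdot h$ follows. The assertion $\dD\fO f(y)h\in\uCs_q(\uW)$ whenever $h\in\uCs_p(\uV)$ is immediate since $h_0=0$ forces $\dD f(y_0)h_0=0$. The main obstacle is the exponent bookkeeping in the $R(y,h)$ increment: extracting the clean exponents $k+\gamma\okappa$ and $1/q$ requires uniformly absorbing the excess between $\omega_{s,t}^{1/p}$ and $\omega_{s,t}^{1/q}$, and between $\normfp{h}^{k+\gamma}$ and $\normfp{h}^{k+\gamma\okappa}$, using the bounded ball and the finiteness of $\omega_{0,T}$; once this is in place, the converse of Taylor step is essentially formal.
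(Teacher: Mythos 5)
Your proposal is correct and follows essentially the same route as the paper's proof: the integral-remainder Taylor expansion, the mixed second-difference estimate of Lemma~\ref{lem-1} applied to $\dD^k f$ to bound the remainder in $q$-variation by $C(\rho)\normfp{h}^{k+\overline{\kappa}\gamma}$, and the converse of Taylor's theorem of Albrecht--Diamond to identify the derivatives, with the Hölder regularity of the top derivative supplied by the same base-case estimate. The only difference is organizational: the paper isolates the $\cC^\gamma$ base case and the multiplication step as separate preliminary items, whereas you carry out the increment computations directly in the general case.
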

\begin{proof}
    \noindent I) Assume that $f\in\cC^\gamma$.
       Thanks to the embedding from $\uC_{p/\gamma}(L(\uU,\uV))$ to $\uC_q(L(\uU,\uV))$,
	$\fO f$ maps $\uCs_p(\uV)$ to $\uC_{q}(L(\uU,\uV))$ 
    with $\normq{\fO f(y)}\leq C\normhold{\gamma}{f}\normp{y}$
   for all $y\in\uCs_p(\uV)$.  

    With Lemma~\ref{lem-1}, we easily obtain that
    \begin{equation*}
	\normfq{\fO f(y)-\fO f(z)}\leq \normhold{\gamma}{f}(1+\omega_{0,T}^{\okappa\gamma/p})\normfp{y-z}^{\okappa\gamma}
	(\normp{y}^{\kappa\gamma}+\normp{z}^{\kappa\gamma}).
    \end{equation*}
    Then $\fO f$ is locally $\overline{\kappa}\gamma$-Hölder continuous
    from $\uC_p(\uV)$ to $\uC_q(\uW)$.

    \medskip\noindent II)  For some Banach space $\uW'$, 
    if $y\in\uC_q(L(\uV\otimes\uW',\uV))$ 
    and $z\in\uC_p(\uV)$ (resp. $y\in\uC_q(\uV)$, $z\in\uC_p(\uU)$), 
    it is straightforward to 
    show with \eqref{eq:70} that $yz\in\uC_q(L(\uW',\uV))$ (resp. $y\otimes z\in\uC_q(\uV\otimes\uU)$)
    with 
    \begin{equation*}
	\normq{y\cdot z}\leq (1+2\omega_{0,T}^{1/q})\normfq{y}\normfq{z}
	\text{ and }\normfq{y\cdot z}\leq \abs{y_{0}}\cdot\abs{z_{0}}+\normq{y\cdot z},
    \end{equation*}
    where $y\cdot z=(y_tz_t)_{t\in[0,t]}$ (resp. $y\cdot z=(y_t\otimes z_t)_{t\in[0,T]}$)
    Besides, if $z_0=0$, then $(yz)_0=0$ (resp. $(y\otimes z)_0=0$).

    \medskip\noindent III) Let us assume now that $f\in\cC^{k+\gamma}$
    for some $k\geq 1$. With the Taylor development of $f$ up
    to order $k$, 
    \begin{gather*}
    f(y+z)=f(y)+\sum_{i=1}^k \frac{1}{i!}\dD^i f(y)z^{\otimes k}
    +R(y,z)
    \shortintertext{with}
    R(y,z)=
    \int_0^1 \frac{(1-s)^{k-1}}{(k-1)!}(\dD^k f(y+s z)-\dD^k f(y))z^{\otimes k}\vd s.
    \end{gather*}
    Since $\dD^i f$ is of class $\cC^{k-i+\gamma}$
    from $\uV$ to $L(\uV^{\otimes i},L(\uU,\uV))$ identified with $L(\uV^{\otimes i}\otimes\uU,\uV)$,
    then $y\in\uC_p(\uV)\mapsto (\dD^i f(y_t))_{t\in[0,T]}$ 
    takes its values in $\uC_q(L(\uV^{\otimes i}\otimes \uU,\uV))$.

    For $y,z^{(1)},\dotsc,z^{(i)}\in\uC_p(\uV)$, write 
    \begin{equation*}
	\phi_i(y)\cdot z^{(1)}\otimes\dotsc\otimes z^{(i)}
	:=(\dD^i f(y_t)\cdot z^{(1)}\otimes\dotsc\otimes z^{(i)})_{t\in[0,T]}.
    \end{equation*}
    Since $\uC_p(\uV)$ is continuously embedded in $\uC_q(\uV)$, 
    II) implies that $\phi_i(y)$ is multi-linear and continuous from $\uC_p(\uV)^{\otimes i}$
    to $\uC_q(\uW)$.

    Similarly, for some constant $C$ that depends only on $\rho$ (the radius of the ball such that 
    $\normfp{y}\leq \rho$), $p$, $q$ and $\omega_{0,T}$,  
    \begin{equation*}
	\frac{\normfq{R(y,z)}}{\normfp{z}^k}
	\leq 
	C\normhold{\gamma}{\dD^k f} \normfp{z}^{\overline{\kappa}\gamma}.
    \end{equation*}
    The converse of the Taylor's theorem \cite{albrecht} 
    implies that $\fO f$ is locally of class~$\cC^{k}$ 
    from $\uC_p(\uV)$ to $\uC_q(\uW)$. In addition, it is easily shown
    from I) that since $\dD^k f$ is locally of class $\cC^\gamma$, 
    $\fO f$ is locally of class $\cC^{k+\overline{\kappa}\gamma}$
    from $\uC_p(\uV)$ to $\uC_q(\uW)$.
\end{proof}

Given $0<\kappa<1$ with $1+\kappa\gamma>p$, we then define 
\begin{equation*}
\fF(y,x,f):=\int \fO f(y)\vd x
\text{ for }(y,x,f)\in\uC_p(\uV)\times \uC_p(\uU)\times \cC^{k+\gamma}(\uV,\uW),
\end{equation*}
as the integral is well defined as a Young integral using our constraint
on $\gamma$, $\kappa$ and $p$.
The map $\fF$ is linear and continuous with respect to $(z,f)$. 
It is of class $\cC^{k+\overline{\kappa}\gamma}$ with respect to $y$.
When $1+\gamma>p$, it is well known that $y=\fF(y,z,f)$ has a unique solution
(see \textit{e.g.}, \cites{lyons02b,friz-victoir,friz14a}). Besides, 
it is evident that for any $a\in\uV$, 
\begin{gather*}
y_t=b_t+\int_{0}^t f(y_s)\vd x_s,\ t\in[0,T]\\
\text{ if and only if }
y^\star_t=b^*_t+\int_{0}^t f(a+y^\star_s)\vd x_s,\ t\in[0,T]
\end{gather*}
when $y=a+y^\star$, $b=a+b^\star$, $b^\star,y^\star\in\uCs_p(\uV)$, so that $\uCsim_p(\uV)
=\uC_p(\uV)/\uCs_p(\uV)$ is identified $\uV$ and $a=y_{0}$.

The Fréchet derivatives of $\fF$ is the direction of the variable $y$ is 
\begin{equation*}
    \dD_{y}\fF(y,z,f)\cdot h=\int (\fO\dD f(y)\cdot h)\vd z
\end{equation*}
which is also well defined as a Young integral.

From II), if $h\in\uCs_p(\uV)$, then $\dD f(y)\cdot h$
takes its values in $\uCs_p(\uV)$. 
With \eqref{eq:115}, \eqref{eq:70} and I), 
for $T$ is small enough (depending only on $\normfi{1+\gamma}{f}$ and $\normp{x}$), 
$h\mapsto \int \dD f(y)\cdot h\vd x$ is strictly contractive on $\uCs_p(\uV)$. 
This proves that $\fG(y^\star,a,x,f):=\fF(a+y^\star,x,f)$ satisfies 
the conditions of application of the Implicit Function Theorem~\ref{thm:implicit}
(See Remark~\ref{rem:implicit}). This is illustrated by Figure~\ref{fig:1}.

We then recover and extend the result in \cite{li05a}.

\begin{figure}
    \begin{center}
	\small
	\begin{tikzpicture}

	    \node[draw,rounded corners] (A) {$y\in \uC_p(\uV)$} ;

	    \node[right=1.5cm of A,draw,rounded corners] (B) {$\fO f(y)\in\uC_q(\uV)$};

	    \node[above= of B,draw,rounded corners] (C) {$x\in\uC_p(\uU)$};

	    \coordinate (BC) at ($(B)!0.5!(C)$);

	    \node[right=2.5cm of BC,draw,rounded corners] (I) {$a+\int \fO f(y)\vd x\in\uC_p(\uV)$};

	    \draw[->] ($(A.east)+(1mm,0)$) -- node[above] {$\cC^{k+\overline{\kappa}\gamma}$}  ($(B.west)-(1mm,0)$);

	    \draw[->] ($(B.east)+(1mm,0)$) -- node[below,right,xshift=2mm,pos=0.2] {$\cC^\infty$ (linear, cont.)} ($(I.west)+(-1mm,-1mm)$);

	    \draw[->] ($(C.east)+(1mm,0)$) -- node[above,right,xshift=2mm,pos=0.2] {$\cC^\infty$ (linear, cont.)} ($(I.west)+(-1mm,1mm)$);

	    \coordinate (Ifixed) at ($(I.south west)!0.75!(I.south east)$);
	    \coordinate (Ifixedsouth) at ($(Ifixed)-(0,1.5cm)$);

	    \draw[->] let \p1=(Ifixedsouth),\p2=(A.south) in ($(Ifixed)-(0,1mm)$) -- (Ifixedsouth) -- node[below,midway] {fixed point} (\x2,\y1) -- ($(A.south)+(0,-1mm)$);

	\end{tikzpicture}
	\caption{\label{fig:1} Schematic representation of the use of the Omega lemma.}
    \end{center}
\end{figure}
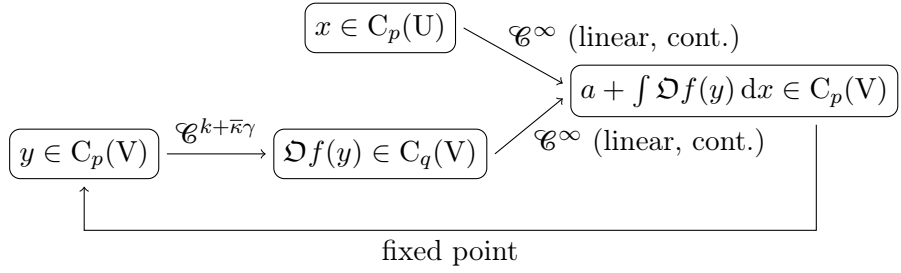

\begin{theorem}
    \label{thm:regul:1}
    Fix $p\in[1,2)$, $x\in\uC_p(\uU)$, $a\in\uV$ 
    and $f\in\cC^{k+\gamma}(\uV,L(\uU,\uV))$, $\kappa\in(0,1)$, $\gamma\in(0,1]$, $k\geq 1$, 
   provided that $1+\kappa\gamma>p$, 
   there exists a unique solution $\fI(a,f,x,b)$ in $\uC_p(\uV)$ to 
   \begin{equation}
       \label{eq:71}
   y_{t}=a+\int_{0}^t f(y_s)\vd x_s+b_{0,t},\ t\in[0,T].
   \end{equation}
   Besides, $\fI$ is locally of class $\cC^{k+\overline{\kappa}\gamma}$
   from $\uV\times\cC^{k+\gamma}(\uV,L(\uU,\uV))\times \uC_p(\uU)\times\uC_p(\uV)$ to $\uC_p(\uV)$.
\end{theorem}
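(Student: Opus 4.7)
The plan is to realize equation \eqref{eq:71} as a fixed-point problem fitting the framework of Theorem~\ref{thm:implicit}, and then to read off the regularity from the Omega lemma. I would set $\uP:=\uC_p(\uV)$, $\uPs:=\uCs_p(\uV)$ so that $\uPsim\simeq\uV$ via the starting point, and take as parameter space $\Lambda:=\cC^{k+\gamma}(\uV,L(\uU,\uV))\times\uC_p(\uU)$ with $\lambda:=(f,x)$. Writing $y=a+y^*$ and $b_{0,\cdot}=b^*\in\uCs_p(\uV)$, the equation~\eqref{eq:71} is equivalent to the fixed-point problem $y^*=\fF(a+y^*,f,x)+b^*$ where $\fF(y,f,x):=\int_{0}^{\cdot} f(y_s)\vd x_s$. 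The assumption $1+\kappa\gamma>p$ gives $1/p+1/q>1$ with $q:=p/\kappa\gamma$, so that Proposition~\ref{prop:young} guarantees that the Young integral of $\fO f(y)\in\uC_q(\uW)$ against $x\in\uC_p(\uU)$ is well defined and takes values in $\uCs_p(\uV)$.

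Next I would check the regularity required by hypothesis (i) of Theorem~\ref{thm:implicit} for $\fG(y^*,a,f,x):=\fF(a+y^*,f,x)$. Translation by $a$ is affine, Young integration $(z,x)\mapsto \int z\vd x$ is bilinear continuous (hence $\cC^\infty$), and the dependence on $f$ is linear. The heart of the matter is then Proposition~\ref{prop:om:fpv}, which asserts that $y\mapsto\fO f(y)$ is locally of class $\cC^{k+\okappa\gamma}$ from $\uC_p(\uV)$ to $\uC_q(\uW)$ together with the joint regularity in $f$ (which enters linearly through $\dD^j f$). Composing these maps shows that $\fG$ is locally of class $\cC^{k+\okappa\gamma}$ on the whole product space, and identifies its partial derivative in $y^*$ as $h\mapsto \int \dD f(a+y^*_s)\cdot h_s\vd x_s$.

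The key analytic point, and the main obstacle, is verifying hypothesis~(ii): that $\Id-\dD_{y^*}\fG$ is invertible with bounded inverse on $\uCs_p(\uV)$. Combining the Young estimate~\eqref{eq:115}, the embedding~\eqref{eq:70}, and part~I) of the proof of Proposition~\ref{prop:om:fpv} applied to $\dD f\in\cC^{k-1+\gamma}$, one sees that on a time interval of length $\tau$ chosen small enough in terms of $\normfi{1+\gamma}{f}$ and $\normp{x}$, the linear map $h\mapsto \int \dD f(a+y^*_s)\cdot h_s\vd x_s$ is strictly contractive from $\uCs_p(\uV)$ into itself; hence $\Id$ minus this operator is a Banach-space isomorphism with bounded inverse via the Neumann series. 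This simultaneously yields existence and uniqueness of $y^*$ on the small interval and verifies the hypothesis needed for the implicit function argument.

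With both hypotheses in hand, Theorem~\ref{thm:implicit} provides a locally $\cC^{k+\okappa\gamma}$ solution map $\fI$ on $[0,\tau]$. Following Remark~\ref{rem:implicit}, I would then split $[0,T]$ into abutting sub-intervals $[t_i,t_{i+1}]$ on which $\omega_{t_i,t_{i+1}}$ is small enough for the previous construction to apply uniformly, and stack the local solutions by using $y_{t_i}$ as the new starting point at each step. Since the composition of $\cC^{k+\okappa\gamma}$ maps is $\cC^{k+\okappa\gamma}$, and since the dependence of $y_{t_i}$ on $(a,f,x,b)$ inherits this regularity from the preceding block, the global Itô map $(a,f,x,b)\mapsto \fI(a,f,x,b)$ is locally of class $\cC^{k+\okappa\gamma}$ on $\uV\times \cC^{k+\gamma}(\uV,L(\uU,\uV))\times \uC_p(\uU)\times \uC_p(\uV)$, as claimed.
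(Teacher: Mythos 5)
Your proposal is correct and follows essentially the same route as the paper: recast \eqref{eq:71} as the fixed-point problem $y^*=\fF(a+y^*,x,f)+b^*$ on $\uCs_p(\uV)$, obtain the $\cC^{k+\overline{\kappa}\gamma}$ regularity of $\fG$ from Proposition~\ref{prop:om:fpv} combined with the bilinear continuity of the Young integral (Proposition~\ref{prop:young}), verify the invertibility hypothesis by contractivity of $h\mapsto\int \dD f(a+y^*)\cdot h\vd x$ on a small time interval, and conclude with Theorem~\ref{thm:implicit} and the stacking argument of Remark~\ref{rem:implicit}. No gaps.
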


For some $t\in[0,T]$, let $\fe_t:\uC_p(\uV)\to\uV$ be the evaluation map $\fe_t(y)=y_t$.
Thanks to~\eqref{eq:70}, $\fe_t$ is continuous. We then define
$\ff_t(a):=\fe_t\circ\fI(a,f,0)$ for some vector field $f$.

\begin{corollary}
    \label{cor:flow}
    Under the conditions of Theorem~\ref{thm:regul:1}, 
    $\ff_t:\uV\to\uV$ is locally a diffeomorphism of class $\cC^{k+\overline{\kappa}\gamma}$
   for any $t\in[0,T]$. 
\end{corollary}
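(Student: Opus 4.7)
The plan is to obtain the result as an immediate consequence of Theorem~\ref{thm:regul:1}, combined with the inverse function theorem in its Hölder version (as referenced in the footnote following Theorem~\ref{thm:implicit}). Two ingredients are required: the regularity of $\ff_t$, and the invertibility of its Fréchet derivative at each point.

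For the regularity, observe that $\ff_t(a)=\fe_t\circ\fI(a,f,x,0)$ and that the parameters $f$ and $x$ are kept fixed. By Theorem~\ref{thm:regul:1}, the map $a\in\uV\mapsto \fI(a,f,x,0)\in\uC_p(\uV)$ is locally of class $\cC^{k+\overline{\kappa}\gamma}$. The evaluation map $\fe_t:\uC_p(\uV)\to\uV$ is linear and continuous by~\eqref{eq:70}, hence of class $\cC^\infty$. The chain rule then yields $\ff_t\in\cC^{k+\overline{\kappa}\gamma}_{\mathrm{loc}}(\uV,\uV)$ with $\dD\ff_t(a)\cdot h=\fe_t\bigl(\dD_a\fI(a,f,x,0)\cdot h\bigr)$.

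To apply the inverse function theorem, I need to check that $J_t(a):=\dD\ff_t(a)$ is an invertible element of $L(\uV,\uV)$. Differentiating \eqref{eq:71} with respect to $a$ (which is justified by Theorem~\ref{thm:regul:1} together with the linear continuity of $\fe_t$), one sees that $J_\cdot(a)\in\uC_p(L(\uV,\uV))$ solves the linear Young equation
\begin{equation*}
J_t(a)=\Id+\int_0^t \dD f(y_s)\,J_s(a)\vd x_s,\qquad y=\fI(a,f,x,0).
\end{equation*}
Since $\dD f$ is bounded and $(k-1+\gamma)$-Hölder, standard Young integration produces a (global) solution. I would then introduce the companion linear equation
\begin{equation*}
K_t=\Id-\int_0^t K_s\,\dD f(y_s)\vd x_s,
\end{equation*}
which is similarly solvable in $\uC_p(L(\uV,\uV))$. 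Applying the Young-integral product rule to the path $t\mapsto K_tJ_t(a)$ gives $K_tJ_t(a)=\Id$ for all $t$, and symmetrically $J_t(a)K_t=\Id$. Thus $J_t(a)$ is a continuous linear isomorphism of $\uV$.

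The inverse function theorem in the $\cC^{k+\overline{\kappa}\gamma}$ category (see the footnote after Theorem~\ref{thm:implicit}, invoking the argument of~\cite{norton}) then ensures that $\ff_t$ is locally a $\cC^{k+\overline{\kappa}\gamma}$-diffeomorphism. The main subtlety lies in justifying the two steps of passing the derivative through the Young integral and applying the product rule to $KJ$; both are standard in Young theory under the standing condition $1+\kappa\gamma>p$, which guarantees that all products of the $\dD^j f(y)$ with elements of $\uC_p$ remain of finite $p$-variation controlled by~$\omega$.
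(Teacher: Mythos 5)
Your proposal is correct, and the regularity half (composing $\fe_t$ with $\fI(\cdot,f,x,0)$ and invoking Theorem~\ref{thm:regul:1}) is exactly what the paper does. Where you genuinely diverge is in proving invertibility of $J_t(a)=\dD\ff_t(a)$. The paper works with the two-parameter flow $y_{t,r}(a)$, shows via \eqref{eq:115} and \eqref{eq:70} that $\normop{\dD_a y_{t,r}(a)-\Id}\leq\ell<1$ for $t-r$ small, gets invertibility from the Neumann series on each short interval, and then propagates the diffeomorphism property to arbitrary $t$ through the flow identity $y_{t,r}=y_{t,s}\circ y_{s,r}$ and stability of $\cC^{k+\overline{\kappa}\gamma}$ under composition. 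You instead exhibit the inverse of $J_t(a)$ globally in one stroke, as the solution $K$ of the companion linear Young equation, using the Young product rule on $KJ$. Your route avoids the subdivision-and-composition step entirely; its price is that you must justify solvability of the backward linear equation and the integration-by-parts formula, both standard under $1+\kappa\gamma>p$ (which forces $1+\gamma>p$, so $\dD f(y)\in\uC_{p/\gamma}$ and all the relevant Young integrals exist). One small point to tighten: in a general Banach space $\uV$, $K_tJ_t=\Id$ does not ``symmetrically'' give $J_tK_t=\Id$; you should note that $L_t:=J_tK_t$ solves $\vd L=(\vd A)L-L(\vd A)$ with $L_0=\Id$ (where $A_t=\int_0^t\dD f(y_s)(\cdot)\vd x_s$), and conclude $L\equiv\Id$ by uniqueness for linear Young equations. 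With that addendum your argument is complete and, arguably, more direct than the paper's.
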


\begin{proof}
    Let us consider the solution to $y_{t,r}(a)=a+\int_r^t f(y_{s,r}(a))\vd x_s$
    for $0\leq r\leq t$.  Using the chain rule by combining the Omega lemma (Proposition~\ref{prop:om:fpv})
    with the bilinearity of the Young integral (Proposition~\ref{prop:young}), 
    \begin{multline}
	\label{eq:123}
	\dD_a y_{t,r}(a)=\Id+\int_r^t \dD f(y_{s,r}(a))\dD_a y_{s,r}(a)\vd x_s\\
	=\Id+\int_r^t \dD f(y_{s,r}(a))\fe_s\circ \dD_a\fI(a,f,0)\vd x_s. 
    \end{multline}
    Owing to the controls given above ---~namely \eqref{eq:115}, the boundedness of $\dD f$
    and Theorem~\ref{thm:regul:1}~--- the right-hand side of \eqref{eq:123}
    is bounded when $a$ belongs to a set $\Set{\abs{a}\leq R}$. Besides, due to \eqref{eq:70}, 
    it is easily seen that for $t-r$ small enough (in function of $f$, $\gamma$, $\kappa$, $\omega_{r,t}$, $p$
    and $R$ such that $\abs{a}\leq R$), one may choose a constant $0<\ell<1$ such that 
    \begin{equation*}
    \normop{\dD_a y_{t,r}(a)-\Id}\leq \ell,
    \end{equation*}
    where $\normop{\cdot}$ is the operator norm of $L(\uV,\uV)$.
    It follows that $\dD_a y_{t,r}$ is invertible at the point $a$.
    The Inverse Mapping Theorem (see \textit{e.g.}, \cite{abraham}*{Theorem 2.5.2, p.~116})
    and Remark~\ref{rem:implicit}
    assert that $y_{t,r}$ is then locally a $\cC^{k+\overline{\kappa}\gamma}$-diffeomorphism
    around any point $a\in\uV$.

    Using the additive property of the integral and the uniqueness of the solution to~\eqref{eq:71}, 
    $y_{t,r}(a)=y_{t,s}(y_{s,r}(a))$. Hence, the regularity of $a\mapsto y_{t,r}(a)$
    for a arbitrary times $r,t$ is treated using the stability of $\cC^{k+\overline{\kappa}\gamma}$
    under composition and the flow property.

    To conclude, it remains to remark that $\ff_t(a)= y_{t,0}(a)$.
\end{proof}

\section{The Omega lemma for controlled rough paths}

\label{sec:crp}

Let us fix $p,q,r$ with $2\leq p<3$, $q>0$ and $0<r<p$. 
We consider a rough path~$\bx\in\uR_p(\uU)$ (see the Introduction 
for a definition).

A \emph{Controlled Rough Path} (CRP) is a path $y:[0,T]\to\uV$ which admits the
following decomposition for any $s,t$:
\begin{equation}
    \label{eq:crp}
y_{s,t}=\yd_s \bx^1_{s,t}+y^\sharp_{s,t}
\text{ with }\yd\in\uC_q(L(\uU,\uV))
\text{ and }\normr{\ys}<+\infty.
\end{equation}

\begin{notation}
A CRP $y$ is identified as the pair 
of paths $(y,\yd)$, as $\ys$ may be computed from $y$ and $\yd$. 
\end{notation}

We write
\begin{equation*}
    \normx{y}:=\normq{\yd}+\normr{\ys}
    \text{ and }
    \normfx{y}:=\abs{y_0}+\abs{\yd_0}+\normx{y}.
\end{equation*}
The space of CRP is denoted by $\uP_{p,q,r}(\bx,\uV)$. This is a Banach space
with the norm $\normfx{\cdot}$.

Useful inequalities are 
\begin{align}
    \label{eq:useful:1}
    \normsup{y^\dag}&\leq (1+\omega_{0,T}^{1/q})\normfx{y},\\
    \notag
    \normp{y}&\leq \normsup{y^\dag}\normp{\bx}+\normr{\ys}\omega_{0,T}^{1/r-1/p}\\
    \label{eq:useful:1bis}
	     &\leq \normfx{y}(1+\normp{\bx})(1+\omega_{0,T}^{1/q}+\omega_{0,T}^{1/r-1/q})
    \text{ when }r\leq p\\
    \label{eq:useful:2}
    \text{ and }
    \normsup{y}&\leq (1+\omega_{0,T}^{1/q}+\omega_{0,T}^{1/r-1/p})(1+\omega_{0,T}^{1/p})\normfx{y}(1+\normp{\bx}).
\end{align}

\begin{remark}
    \label{rem:crp}
    This definition, which involves three indices, is more general than 
    the one in \cite{friz14a}, in which $(p,q,r)=(p,p,p/2)$, and the seminal
    article \cite{gubinelli}, in which $(p,q,r)=(p,q,(p^{-1}+q^{-1})^{-1})$. 
    The reason of this flexibility will appear with the Omega lemma.
\end{remark}

For any $s\in[0,T]$, we extend $\yd_{s}$ as an operator in $L(\uU\otimes \uU,\uU\otimes \uV)$
by $\yd_{s}(a\otimes b)=a\otimes \yd_{s} b$ for all $a,b\in\uU$.

\begin{proposition} Assume that 
    \begin{equation*}
	\theta:=\min\Set*{\frac{2}{p}+\frac{1}{q},\frac{1}{p}+\frac{1}{r}}>1. 
    \end{equation*}
    There exists a continuous linear map $y\mapsto y^\flat$ on $\uP_{p,q,r}(\bx,\uV)$ 
    which transforms~$y$ to $\Set{y^\flat_{s,t}}_{[s,t]\subset[0,T]}$ with 
    values in $\uU\otimes \uV$ such that 
    \begin{equation}
	\label{eq:104}
    y^\flat_{r,s}+y^\flat_{s,t}+\bx^1_{r,s}\otimes y_{s,t}=y^\flat_{r,t},
     \text{ for all } 0\leq r\leq s\leq\ t\leq T.
    \end{equation}
    Moreover, for some universal constants $K$ and $K'$, 
    \begin{gather}
	\label{eq:103}
	\abs{y^\flat_{s,t}-\yd_s\bx^2_{s,t}}
	\leq 
	K\normx{y}(\normp{\bx}\vee\normp{\bx}^2)\omega_{s,t}^\theta\\
	\shortintertext{and}
	\label{eq:105}
	\begin{aligned}
	    \normpot{y^\flat}&\leq
	    \big(\abs{\yd_0}+(\omega_{0,T}^{1/q}+K\omega_{0,T}^{\theta-2/p}\normx{y})\big)(\normp{\bx}\vee\normp{\bx}^2)\\
			     &\leq K'\normfx{y}(\normp{\bx}\vee\normp{\bx}^2).
    \end{aligned}
    \end{gather}
\end{proposition}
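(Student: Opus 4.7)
The strategy is to apply the standard Gubinelli sewing lemma after reducing the non-additive relation~\eqref{eq:104} to plain additivity. The natural germ suggested by~\eqref{eq:103} is
\begin{equation*}
\mu_{s,t} := \yd_s\bx^2_{s,t}\in\uU\otimes\uV,
\end{equation*}
but $\delta\mu_{r,s,t} := \mu_{r,t} - \mu_{r,s} - \mu_{s,t}$ is not small: the prescribed defect in~\eqref{eq:104} is $\bx^1_{r,s}\otimes y_{s,t}$. To absorb this inhomogeneity, set
\begin{equation*}
\eta_{s,t} := \yd_s\bx^2_{s,t} + \bx^1_{0,s}\otimes y_{s,t},
\end{equation*}
and observe the algebraic identity
\begin{equation*}
\bx^1_{0,r}\otimes y_{r,t} - \bx^1_{0,r}\otimes y_{r,s} - \bx^1_{0,s}\otimes y_{s,t} = -\bx^1_{r,s}\otimes y_{s,t},
\end{equation*}
which gives $\delta\eta_{r,s,t} = \delta\mu_{r,s,t} - \bx^1_{r,s}\otimes y_{s,t}$.

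Next I would compute $\delta\mu_{r,s,t}$ explicitly using Chen's relation $\bx^2_{r,t} = \bx^2_{r,s} + \bx^2_{s,t} + \bx^1_{r,s}\otimes\bx^1_{s,t}$ and the extension of $\yd_r$ to $\uU\otimes\uU$:
\begin{equation*}
\delta\mu_{r,s,t} = (\yd_r - \yd_s)\bx^2_{s,t} + \bx^1_{r,s}\otimes\yd_r\bx^1_{s,t}.
\end{equation*}
Using the CRP decomposition in the form $\yd_r\bx^1_{s,t} = y_{s,t} - y^\sharp_{s,t} + (\yd_r - \yd_s)\bx^1_{s,t}$, this rearranges to
\begin{equation*}
\delta\eta_{r,s,t} = (\yd_r - \yd_s)\bx^2_{s,t} + \bx^1_{r,s}\otimes(\yd_r - \yd_s)\bx^1_{s,t} - \bx^1_{r,s}\otimes y^\sharp_{s,t}.
\end{equation*}
Bounding each term with $\abs{\yd_r - \yd_s}\leq\normq{\yd}\omega_{r,s}^{1/q}$, $\abs{y^\sharp_{s,t}}\leq\normr{\ys}\omega_{s,t}^{1/r}$, $\abs{\bx^j_{s,t}}\leq\normp{\bx}^j\omega_{s,t}^{j/p}$, and super-additivity of~$\omega$, one obtains
\begin{equation*}
\abs{\delta\eta_{r,s,t}} \leq K\normx{y}(\normp{\bx}\vee\normp{\bx}^2)\omega_{r,t}^{\theta},
\end{equation*}
with $\theta = \min\{2/p + 1/q,\ 1/p + 1/r\} > 1$ by hypothesis.

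By the standard sewing lemma there exists a unique additive path $I_{s,t} = I_t - I_s$ (take $I_0 = 0$) with $\abs{I_{s,t} - \eta_{s,t}} \leq K'\normx{y}(\normp{\bx}\vee\normp{\bx}^2)\omega_{s,t}^{\theta}$. I then define
\begin{equation*}
y^\flat_{s,t} := I_{s,t} - \bx^1_{0,s}\otimes y_{s,t}.
\end{equation*}
Relation~\eqref{eq:104} is then immediate from the additivity of $I$ combined with the identity for $\delta(\bx^1_{0,\cdot}\otimes y_{\cdot,\cdot})$, and~\eqref{eq:103} follows from $y^\flat_{s,t} - \yd_s\bx^2_{s,t} = I_{s,t} - \eta_{s,t}$. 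For~\eqref{eq:105}, apply the triangle inequality to $y^\flat_{s,t} = \yd_s\bx^2_{s,t} + (y^\flat_{s,t} - \yd_s\bx^2_{s,t})$, bound $\abs{\yd_s}\leq\abs{\yd_0} + \omega_{0,T}^{1/q}\normq{\yd}$ and $\abs{\bx^2_{s,t}}\leq\normp{\bx}^2\omega_{s,t}^{2/p}$, then factor $\omega_{s,t}^{2/p}$ from both contributions, which is possible because $p\geq 2$ implies $\theta\geq 2/p$ and $\omega_{s,t}^{\theta - 2/p}\leq\omega_{0,T}^{\theta - 2/p}$. Linearity of $y\mapsto y^\flat$ is inherited from linearity of the map $(\yd,\ys,y)\mapsto\eta$ and of the sewing operator; continuity follows from the explicit bounds. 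The main obstacle is the algebraic bookkeeping in expanding $\delta\eta_{r,s,t}$ and tracking the exponents of $\omega$, $\normp{\bx}$, $\normq{\yd}$, $\normr{\ys}$; once this is done the sewing step itself is routine.
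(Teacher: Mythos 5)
Your proof is correct, and it takes a genuinely different route from the paper's. The paper invokes the \emph{multiplicative} sewing lemma of Feyel--de La Pradelle--Mokobodzki: it equips $\uW:=\uU\oplus\uV\oplus(\uU\otimes\uV)$ with the twisted product $(a,b,c)\boxtimes(a',b',c')=(a+a',b+b',c+c'+a\otimes b')$, applies the lemma to the germ $\phi_{s,t}(y)=(\bx^1_{s,t},y_{s,t},\yd_s\bx^2_{s,t})$, and reads off $y^\flat$ as the third component of the resulting multiplicative functional, so that \eqref{eq:104} is automatic from $Y_{r,s}\boxtimes Y_{s,t}=Y_{r,t}$. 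You instead reduce to the \emph{additive} sewing lemma by observing that the inhomogeneity $\bx^1_{r,s}\otimes y_{s,t}$ in \eqref{eq:104} is exactly the coboundary $-\delta(\bx^1_{0,\cdot}\otimes y_{\cdot,\cdot})$, and correcting the germ to $\eta_{s,t}=\yd_s\bx^2_{s,t}+\bx^1_{0,s}\otimes y_{s,t}$. The defect you compute, $\delta\eta_{r,s,t}=(\yd_r-\yd_s)\bx^2_{s,t}+\bx^1_{r,s}\otimes(\yd_r-\yd_s)\bx^1_{s,t}-\bx^1_{r,s}\otimes\ys_{s,t}$, is (up to sign conventions) the same quantity as the paper's \eqref{eq:101}, so the analytic estimates and the hypothesis $\theta>1$ enter identically; your derivations of \eqref{eq:104}, \eqref{eq:103} and \eqref{eq:105} from the sewing output all check out, and linearity/continuity follow from uniqueness in the additive sewing lemma exactly as you say. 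What your approach buys is elementarity: only the standard (additive, vector-valued) sewing lemma is needed, at the cost of introducing the auxiliary non-intrinsic term $\bx^1_{0,s}\otimes y_{s,t}$, which harmlessly cancels in the final estimates since $y^\flat_{s,t}-\yd_s\bx^2_{s,t}=I_{s,t}-\eta_{s,t}$. What the paper's approach buys is that the algebraic relation \eqref{eq:104} is encoded once and for all in the monoid structure rather than verified by hand, and the same multiplicative framework is reused elsewhere (e.g.\ for rough-path constructions); both are fully rigorous here.
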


\begin{proof}
    Let us introduce on the linear space $\uW:=\uU\oplus\uV\oplus(\uU\otimes\uV)$ the 
    (non-commutative) operation
    \begin{equation*}
    (a,b,c)\boxtimes (a',b',c')=(a+a',b+b',c+c'+a\otimes b')
    \end{equation*}
    and the norm $\abs{(a,b,c)}:=\max\Set{\abs{a},\abs{b},\abs{c}}$.

    For $Y,X,Z\in\uW$, it is clear that 
    $\abs{\cdot}$ is Lipschitz continuous and 
    \begin{equation*}
	\abs{Z\boxtimes Y-Z\boxtimes X}\leq (1+\abs{Z})\abs{Y-X}\text{ and } 
	\abs{Y\boxtimes Z-X\boxtimes Y}\leq (1+\abs{Z})\abs{Y-X}.
    \end{equation*}
    With $\boxtimes$, $\uW$ is a monoid for which the hypotheses
    of the Multiplicative Sewing Lemma are fulfilled \cite{feyel}.

    We then define for $y\in\uP_{p,q,r}(\bx,L(\uU,\uV))$ the family of operators
    \begin{equation*}
	\phi_{s,t}(y):=(\bx^1_{s,t},y_{s,t},\yd_s \bx^2_{s,t}),\ [s,t]\subset[0,T].
    \end{equation*}
    Thus, 
    \begin{equation}
	\label{eq:101}
	\phi_{r,s}(y)\boxtimes\phi_{s,t}(y)-\phi_{r,t}(y)
	=(0,0,
	\yd_{r,s}\bx^2_{s,t}
	+\bx^1_{r,s}\ys_{s,t}
	-\bx^1_{r,s}\otimes\yd_{r,s}\bx^1_{s,t}
	).
    \end{equation}
    The Multiplicative Sewing Lemma \cite{feyel} on $(\phi_{s,t}(y))_{[s,t]\subset[0,T]}$
    yields the existence of a family $\Set{Y_{s,t}}_{[s,t]\subset[0,T]}$ taking its values in $\uW$ 
    with $\abs{Y_{s,t}-\phi_{s,t}(y)}\leq C\omega_{s,t}^\theta$ for any $[s,t]\subset[0,T]$.
    We define $y^\flat$ as the part in $\uU\otimes\uV$ in 
    the decomposition of $Y$ as $Y_{s,t}=(\bx^1_{s,t},y_{s,t},y^\flat_{s,t})$.
    Since $Y$ satisfies $Y_{r,s}\boxtimes Y_{s,t}=Y_{r,t}$, $\yf$ satisfies~\eqref{eq:104}.
    From~\eqref{eq:101}, we easily obtain \eqref{eq:103} and \eqref{eq:105}.

    For $y,z\in\uP_{p,q,r}(\bx,\uV)$, 
    \begin{equation*}
    \phi_{r,s}(y+z)\boxtimes\phi_{s,t}(y+z)
    =\phi_{r,s}(y)\boxtimes\phi_{s,t}(y)
    +\phi_{r,s}(z)\boxtimes\phi_{s,t}(z).
    \end{equation*}
    From this additivity property, the construction of the Multiplicative
   Sewing Lemma and \eqref{eq:105}, $y\mapsto y^\flat$ is linear and continuous.  
\end{proof}

When $y$ takes its values in $L(\uV,\uW)$, $\yd$ takes its values
in $L(\uU,L(\uV,\uW))\simeq L(\uU\otimes\uV,\uW)$.

\begin{proposition} 
    \label{prop:crp}
    Fix $(p,q,r)$ and $(p,q',r')$ with $2\leq r<p<3$.
    Assume that 
    \begin{equation*}
	\widehat{\theta}:=\min\Set*{\frac{2}{p}+\frac{1}{q},\frac{1}{p}+\frac{1}{r}}>1
	\text{ and }
	\theta':=\min\Set*{\frac{2}{p}+\frac{1}{q'},\frac{1}{p}+\frac{1}{r'}}>1.
    \end{equation*}

    There exists a bilinear continuous mapping 
    \begin{equation*}
	\begin{aligned}
	    \uP_{p,q,r}(\bx,L(\uW,\uV))\times\uP_{p,q',r'}(\bx,\uW)
	    &\mapsto \uP_{p,p\vee q',p/2}(\bx,\uV)\\
	    (y,z)&\to \int y\vd z
	\end{aligned}
    \end{equation*}
    such that $\left(\int y\vd z\right)^\dag_s=y_s\zd_s$, 
    \begin{gather}
	\label{eq:106}
	\abs*{\int_s^t y_r\vd z_r-y_s z_{s,t}-\yd_s z^\flat_{s,t}}
	\leq 
	K\normx{y}\normfx{z}(1+\normp{\bx}\vee\normp{\bx}^2)\omega_{s,t}^{\widehat{\theta}}\\
	\shortintertext{and}
    \label{eq:107}
	\begin{multlined}
	\normx*{\int y\vd z}
	\leq 
	K'\normfx{y}\normfx{z}(1+\normp{\bx}\vee\normp{\bx}^2)
    \end{multlined}
    \end{gather}
    for some universal constants $K$ and $K'$.
\end{proposition}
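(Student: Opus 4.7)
The plan is to apply the (additive) Sewing Lemma to a compensated Riemann ansatz and then read off the CRP structure of the resulting integral. I would first set, for $[s,t]\subseteq[0,T]$,
\[
    a_{s,t} := y_s\, z_{s,t} + \yd_s\, z^\flat_{s,t},
\]
where $z^\flat$ is the second-order term produced by the previous proposition (which requires precisely the hypothesis $\theta'>1$), and where $\yd_s\in L(\uU,L(\uW,\uV))$ is identified with an element of $L(\uU\otimes\uW,\uV)$ so that $\yd_s z^\flat_{s,t}\in\uV$.

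The next step is to establish the cocycle bound. Expanding $\delta a_{r,s,t}:=a_{r,t}-a_{r,s}-a_{s,t}$, using the CRP decomposition $y_{r,s}=\yd_r\bx^1_{r,s}+\ys_{r,s}$ and the Chen-like identity \eqref{eq:104} for $z^\flat$, the $\yd_r\bx^1_{r,s}z_{s,t}$ contributions cancel exactly and I would obtain
\[
    \delta a_{r,s,t} = -\ys_{r,s}\, z_{s,t} - \yd_{r,s}\, z^\flat_{s,t}.
\]
Bounding $\abs{z_{s,t}}\leq C\omega_{s,t}^{1/p}$ via \eqref{eq:useful:1bis} (valid since $\theta'>1$ forces $r'<p$) and $\abs{z^\flat_{s,t}}\leq C\omega_{s,t}^{2/p}$ via \eqref{eq:105}, the first piece is of order $\omega_{r,t}^{1/r+1/p}$ and the second of order $\omega_{r,t}^{1/q+2/p}$; both exponents are at least $\hat\theta>1$. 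The Sewing Lemma of \cite{feyel} then produces a unique additive family $(I_{s,t})$ with $I_{r,t}=I_{r,s}+I_{s,t}$ satisfying the remainder bound \eqref{eq:106}; I would set $\int_s^t y\,dz := I_{s,t}$.

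To finish, I would decompose $z_{s,t}=\zd_s\bx^1_{s,t}+z^\sharp_{s,t}$ and write
\[
    \int_s^t y\,dz = (y_s\zd_s)\,\bx^1_{s,t} + \bigl(y_s z^\sharp_{s,t} + \yd_s z^\flat_{s,t} + (I_{s,t}-a_{s,t})\bigr).
\]
This identifies the derivative process as $y\zd$, which lies in $\uC_{p\vee q'}(L(\uU,\uV))$ by the pointwise product estimate from step~II) of Section~\ref{sec:pvar} applied to $y\in\uC_p$ and $\zd\in\uC_{q'}$; the sharp part is the bracketed remainder, each summand of which is of order $\omega^{2/p}$ (the middle via \eqref{eq:105}, the last via $\hat\theta>2/p$, a consequence of $r<p$ and $1/q>0$, and the first from the $p/2$-variation regularity of $z^\sharp$). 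Bilinearity of $(y,z)\mapsto a$ is clear and is preserved by the linearity of the Sewing construction, while \eqref{eq:107} follows by summing the dagger and sharp estimates and absorbing the $\bx$-dependence into the factor $\normp{\bx}\vee\normp{\bx}^2$.

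The main obstacle is the careful bookkeeping of variation exponents: verifying that $\hat\theta>1$ is precisely what enables the sewing of $a_{s,t}$, that $\theta'>1$ is precisely what allows the construction of $z^\flat$ in the first place, and that the resulting sharp remainder has the claimed $p/2$-variation regularity. This exponent management is the whole motivation for introducing the three-parameter CRP space in Remark~\ref{rem:crp}.
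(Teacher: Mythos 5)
Your proof is correct and follows the paper's argument exactly: the paper likewise sews the germ $Y_{s,t}=y_sz_{s,t}+\yd_s z^\flat_{s,t}$, whose coboundary is $\ys_{r,s}z_{s,t}+\yd_{r,s}z^\flat_{s,t}$, and then declares \eqref{eq:106}--\eqref{eq:107} straightforward. You supply the exponent bookkeeping and the dagger/sharp identification that the paper omits; note only that your appeal to the ``$p/2$-variation regularity of $z^\sharp$'' tacitly uses $r'\le p/2$ (as in the paper's later choice $r'=p/2$), which does not follow from $\theta'>1$ alone.
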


\begin{proof} 
We set $Y_{s,t}:=y_sz_{s,t}+\yd_s z^\flat_{s,t}$. Thus, for any $r\leq s\leq t$, 
\begin{equation*}
Y_{r,s}+Y_{s,t}-Y_{r,t}
=y^\sharp_{r,s}z_{s,t}+\yd_{r,s}z^\flat_{s,t}.
\end{equation*}
The existence of the integrals follows from the Additive Sewing
Lemma \cite{feyel}. Therefore, the inequalities \eqref{eq:106} and \eqref{eq:107} are straightforward.
\end{proof}

From now, let us fix $2\leq p<3$, $q,r\geq 1$ as well as $0<\gamma\leq 1$ and  $0<\kappa<1$. 
For two Banach spaces $\uV$ and $\uW$ and a rough path
$\bx\in\uR_p(\uU)$, we set 
\begin{gather*}
    \uP\uV:=\uP_{p,q,r}(\bx,\uV),\ 
    \uPs\uV:=\Set{y\in\uP\uV\given (y_{0},\yd_{0})=(0,0)},\\
    \uQ\uW:=\uP_{p,\frac{q\vee p}{\kappa\gamma},r\vee \frac{p}{1+\kappa\gamma}}(\bx,\uW)
    \text{ and }
    \uQs\uW:=\Set{y\in\uQ\uW\given (y_{0},\yd_{0})=(0,0)}.
\end{gather*}
The spaces $\uPs\uV$ and $\uQs\uW$ are Banach sub-spaces of $\uP\uV$
and $\uQ\uW$.

\begin{proposition}[The Omega lemma for CRP]
    \label{prop:om:crp}
    Assume that $f\in\cC^{k+1+\gamma}(\uV,\uW)$.
    Then 
    $\fO f:=(f(y_t))_{t\in[0,T]}$ is locally of class $\cC^{k+\overline{\kappa}\gamma}$
    from $\uP\uV$ to $\uQ\uW$ with $\fO f(y)^\dag=\dD f(y)\yd$.
    Besides, 
    \begin{equation*}
    \dD \fO f(y)\cdot z=
    \left( \dD f(y)_t\cdot z_t
    \right)_{t\in[0,T]}\in\uQ\uW,\ \forall y,z\in\uP\uV.
    \end{equation*}
    In addition, if $z\in\uPs\uV$, then $\dD\fO f(y)z\in\uQs\uW$
    for any $y\in\uP\uV$.
\end{proposition}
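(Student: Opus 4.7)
The plan is to mirror the three-step structure of the proof of Proposition~\ref{prop:om:fpv}, adapting each step from $\uC_p$-paths to CRPs.

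\textbf{Step I} ($k=0$, $f\in\cC^{1+\gamma}$). A first-order Taylor expansion gives
\begin{equation*}
f(y_t)-f(y_s)=\dD f(y_s)\,y_{s,t}+R_{s,t},\quad |R_{s,t}|\le\tfrac12 \normhold{\gamma}{\dD f}|y_{s,t}|^{1+\gamma}.
\end{equation*}
Substituting $y_{s,t}=\yd_s\bx^1_{s,t}+\ys_{s,t}$ identifies $\fO f(y)^\dag=\dD f(y)\yd$ and the sharp part $\fO f(y)^\sharp_{s,t}=\dD f(y_s)\ys_{s,t}+R_{s,t}$. Using \eqref{eq:useful:1bis}, $\dD f(y)$ has $(p/\gamma)$-variation (since $\dD f\in\cC^\gamma$), so $\dD f(y)\yd$ has $\max(q,p/\gamma)$-variation which embeds into the target $(p\vee q)/(\kappa\gamma)$-variation; likewise $R_{s,t}$ has $p/(1+\gamma)$-variation, embedded into $r\vee p/(1+\kappa\gamma)$. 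Thus $\fO f(y)\in\uQ\uW$ with the announced Gubinelli derivative. For local $\bar\kappa\gamma$-Hölder continuity I would bound $\fO f(y)-\fO f(z)$ component-wise, the key tool being Lemma~\ref{lem-1} applied to $\dD f\in\cC^\gamma$:
\begin{equation*}
|\dD f(y_t)-\dD f(y_s)-\dD f(z_t)+\dD f(z_s)|\le\normhold{\gamma}{\dD f}(|y_{s,t}|^{\kappa\gamma}+|z_{s,t}|^{\kappa\gamma})(|y_t-z_t|^{\bar\kappa\gamma}+|y_s-z_s|^{\bar\kappa\gamma}).
\end{equation*}
The $\kappa\gamma$ factor is absorbed into the variation weight $\omega^{\kappa\gamma/p}$ while the $\bar\kappa\gamma$ factor becomes the Hölder exponent in $\normfx{y-z}$ (via~\eqref{eq:useful:2}). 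Parallel applications of Lemma~\ref{lem-1} to the sharp parts and to the integral form of $R(y,\cdot)-R(z,\cdot)$ close the estimate.

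\textbf{Steps II and III.} Step~II is the CRP product rule needed for Step~III: for CRPs $a\in\uP_{p,q',r'}(\bx,L(\uW',\uW))$ and $z\in\uP\uV$, the pointwise application $az$ lies in $\uQ\uW$ with Gubinelli derivative $a\zd+a^\dag z$ and sharp part containing a cross term $a_{s,t}z_{s,t}$ of $p/2$-variation, which is precisely accommodated by the index $r\vee p/(1+\kappa\gamma)$. Iterating gives continuous multilinearity of $(z^{(1)},\dotsc,z^{(i)})\mapsto\phi_i(y)\cdot z^{(1)}\otimes\dotsb\otimes z^{(i)}$ where $\phi_i(y):=\fO\dD^if(y)$. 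For Step~III with $k\ge 1$, Taylor expand
\begin{equation*}
f(y+z)=\sum_{i=0}^k\tfrac{1}{i!}\dD^if(y)z^{\otimes i}+\int_0^1\tfrac{(1-u)^{k-1}}{(k-1)!}\bigl(\dD^kf(y+uz)-\dD^kf(y)\bigr)z^{\otimes k}\vd u.
\end{equation*}
Step~I applied to $\dD^if\in\cC^{1+\gamma}$ (for $i\le k$) together with Step~II shows each candidate $i$-th derivative $z\mapsto\phi_i(y)\cdot z^{\otimes i}$ is continuous multilinear into $\uQ\uW$, while the remainder is bounded by $C\normfx{z}^{k+\bar\kappa\gamma}$ uniformly in $y$ on a bounded set (since $\dD^kf\in\cC^{1+\gamma}$). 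The converse of Taylor's theorem~\cite{albrecht} then yields local $\cC^k$-differentiability, and the $\bar\kappa\gamma$-Hölder continuity of $y\mapsto\fO\dD^kf(y)$ from Step~I promotes this to $\cC^{k+\bar\kappa\gamma}$. The final assertion $\dD\fO f(y)z\in\uQs\uW$ for $z\in\uPs\uV$ is immediate since both $\dD f(y_0)z_0$ and the Gubinelli derivative at $t=0$ vanish when $z_0=\zd_0=0$.

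\textbf{Main obstacle.} The bookkeeping in Step~I is the delicate part: one must track how the splitting $\gamma=\kappa\gamma+\bar\kappa\gamma$ produced by Lemma~\ref{lem-1} interacts with the three distinct variation indices $(p,q',r')$, so that the $\kappa\gamma$ power is fully absorbed into the $\omega$-exponents defining $q'$ and $r'$ while exactly $\bar\kappa\gamma$ remains as the Hölder exponent in $\normfx{y-z}$. The three-index flexibility of CRPs announced in Remark~\ref{rem:crp}, with the precise choice $q'=(p\vee q)/(\kappa\gamma)$ and $r'=r\vee p/(1+\kappa\gamma)$, is dictated by exactly this calculation; it is why the statement is formulated in the general $\uP_{p,q,r}$-setting rather than with the classical $(p,p,p/2)$ indices.
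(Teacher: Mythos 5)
Your proposal follows essentially the same route as the paper's proof: the same decomposition of $f(y)$ into $Y^\dag=\dD f(y)\yd$ and a sharp part $\dD f(y_s)\ys_{s,t}$ plus a Taylor remainder, the same use of Lemma~\ref{lem-1} to split $\gamma$ into $\kappa\gamma+\overline{\kappa}\gamma$ for the Hölder estimate, the same CRP product step, and the same appeal to the converse of Taylor's theorem for $k\ge1$. The argument is correct (up to the immaterial constant $\tfrac12$ versus $\tfrac1{1+\gamma}$ in the remainder bound), and your closing remark on why the three-index flexibility is needed matches the paper's Remark~\ref{rem:crp}.
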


\begin{proof} 
    \noindent I) 
    Let us prove first that 
    $\fO f$ maps $\uP\uV$ to $\uP_{p,\frac{p}{\gamma},r\vee \frac{p}{1+\gamma}}(\bx,\uW)$.
       As the latter space is  continuously embedded in $\uQ\uW$,
       this proves that $\fO f$ maps $\uP\uV$ to $\uQ\uW$. 

    Set for $0\leq s\leq t\leq T$, 
    \begin{gather*}
	Y_t:=f\circ y_t,\ Y^\dag_t:=\dD f(y_t)\yd_t\\
	\text{and }
	Y^\sharp_{s,t}:=\dD f(y_s)y^\sharp_{s,t}
	+\int_0^1 (\dD f(y_s+\theta y_{s,t})-\dD f(y_s))y_{s,t}\vd \theta.
    \end{gather*}
    For $Y$, the decomposition~\eqref{eq:crp} is $Y_{s,t}=Y^\dag_s x^1_{s,t}+Y^\sharp_{s,t}$.  Besides, 
    \begin{gather*}
	\abs{Y^\sharp_{s,t}}\leq \normsup{\dD f}\normx{y}\omega_{s,t}^{1/r}+\normhold{\gamma}{\dD f}\normp{y}^{1+\gamma}\omega_{s,t}^{(1+\gamma)/p},\\
      \abs{Y^\dag_{s,t}}\leq \normhold{\gamma}{\dD f}\abs{y_{s,t}}^\gamma\cdot\normsup{\yd}+\normsup{\dD f}\cdot\abs{\yd_{s,t}}.
    \end{gather*}
    We deduce that
    $Y\in\uP_{p,q\vee \frac{p}{\gamma},r\vee \frac{p}{1+\gamma}}(\bx,\uW)$ with 
    \begin{equation*}
	\normx{Y}\leq C\normfi{1+\gamma}{f}\max\Set{\normx{y},\normfx{y}^{1+\gamma}},
    \end{equation*}
    for some universal constant $C$.

    \smallskip
    \noindent II) Setting $f(y,z)=yz$ 
    (resp. $f(y,z)=y\otimes z$)
    for
    $y\in\uP L(\uW,\uV)$
    (resp.  $y\in\uP\uV$)  and  $z\in\uP\uW$
    shows that 
    $yz\in\uP\uV$ (resp. $y\otimes z\in\uP\uW\otimes\uV$).
    Moreover, $(yz)^\dag_t=\yd_t z_t+y_t \zd_t$
    (resp. $(y\otimes z)^\dag_t=\yd_t\otimes z_t+y_t\otimes \zd_t$)
    for any $t\in[0,T]$.

    In particular, $z_0=0$ and $\zd_0=0$ implies
    that $(yz)\in\uPs\uV$ (resp. $y\otimes z\in\uPs\uW\otimes\uV$)
    when $z\in\uPs\uW$.

    In addition, it is easily obtained that for the product $y\cdot z=yz$
    or $y\cdot z=y\otimes z$, 
    \begin{equation*}
	\normfx{yz}\leq C\normfx{y}\normfx{z}\normp{\bx}
    \end{equation*}
    for some universal constant $C$.

    \smallskip
    \noindent III)
    We consider that $f\in\cC^{1+\gamma}(\uV,\uW)$.
    Let $y,z\in\uP\uV$ and set $Y_t:=f(y_t)$, $Z_t:=f(z_t)$.
    According to the definition of $Z^\dag$ and $Y^\dag$, 
    \begin{multline*}
    Z_{s,t}^\dag-Y_{s,t}^\dag\\
    =(\dD f(z_t)-\dD f(z_s)-\dD f(y_t)+\dD f(y_s))\zd_t
    +(\dD f(z_s)-\dD f(y_s))\zd_{s,t}\\
    +\dD f(y_s)(\zd_{s,t}-\yd_{s,t})
    +(\dD f(y_t)-\dD f(y_s))(\zd_t-\yd_t).
    \end{multline*}
    Applying \eqref{eq-20} in Lemma~\ref{lem-1}, for $0<\kappa<1$, 
    \begin{multline*}
	\abs{Z^\dag_{s,t}-Y^\dag_{s,t}}
	\leq 
	\normhold{\gamma}{\dD f}
	\normsup{z-y}^{\overline{\kappa}\gamma}
	(\normp{z}^{\kappa\gamma}+\normp{y}^{\kappa\gamma})
	\normsup{\zd}\omega_{s,t}^{\kappa\gamma/p}
	\\
	+\normhold{\gamma}{\dD f}\normsup{z-y}^{\gamma}\normq{\zd}\omega_{s,t}^{\gamma/q}
        +\normsup{\dD f}\normq{\zd-\yd}^\gamma\omega_{s,t}^{\gamma/q}\\
	+\normhold{\gamma}{\dD f}\normp{y}^\gamma\normsup{y^\dag-z^\dag}\omega_{s,t}^{\gamma/p}.
    \end{multline*}
    From this,  and \eqref{eq:useful:1}-\eqref{eq:useful:2},
    \begin{multline}
	\label{eq:111}
	\frac{\abs{Z^\dag_{s,t}-Y^\dag_{s,t}}}{\omega_{s,t}^{\frac{\kappa\gamma}{p\vee q}}}
        \leq 
	K_1\normhold{\gamma}{\dD f}
	\normfx{z-y}^{\overline{\kappa}\gamma}
    (\normx{z}^{\kappa\gamma}+\normx{y}^{\kappa\gamma})\normfx{z}(1+\normp{\bx})^{1+\overline{\kappa}\gamma}
	\\
	+K_2\normhold{\gamma}{\dD f}\normfx{z-y}^{\gamma}\normx{z}(1+\normp{\bx})^{1+\gamma}\\
        +K_3\normsup{\dD f}\normx{z-y}^\gamma
	+K_4\normhold{\gamma}{\dD f}(1+\normp{\bx})^\gamma
	\normfx{y}^{\gamma}
	\normfx{y-z}
    \end{multline}
    for some universal constants $K_1$, $K_2$, $K_3$ and $K_4$.
    Besides,
    \begin{multline*}
    Z^\sharp_{s,t}-Y^\sharp_{s,t}=\dD f(z_s)z^\sharp_{s,t}-\dD f(y_s)y^\sharp_{s,t}
    +\int_0^1 \big(\dD f(y_s+\tau y_{s,t})-\dD f(y_s)\big)(z_{s,t}-y_{s,t})\vd \tau\\
    +\int_0^t \big(\dD f(z_s+\tau z_{s,t})-\dD f(z_s)-\dD f(y_s+\tau y_{s,t})+\dD f(y_s)\big)y_{s,t}\vd \tau.
    \end{multline*}
    With Lemma~\ref{lem-1} and \eqref{eq:useful:1},
    \begin{multline}
	\label{eq:110}
	\abs{Z^\sharp_{s,t}-Y^\sharp_{s,t}}
        \leq 
        \normx{z}\normhold{\gamma}{\dD f}\normsup{z-y}^\gamma\omega_{s,t}^{1/r}
	\\
        +\normsup{\dD f}\normx{z-y}\omega_{s,t}^{1/r}
	+\normhold{\gamma}{\dD f}\normp{y}\normp{y-z}\omega_{s,t}^{(1+\gamma)/p}\\
        +4^{\overline{\kappa}\gamma}
	\normhold{\gamma}{\dD f}
	\normp{y}
        \normsup{z-y}^{\overline{\kappa}\gamma}
        (\normp{z}^{\kappa\gamma}+\normp{y}^{\kappa\gamma})\omega_{s,t}^{(1+\kappa\gamma)/p}.
    \end{multline}
    With \eqref{eq:useful:1} and \eqref{eq:useful:2}, we deduce
that 
\begin{equation*}
    \abs{Z^\sharp_{s,t}-Y^\sharp_{s,t}}
    \leq 
    C\omega_{s,t}^{\frac{1+\kappa\gamma}{p}\wedge \frac{1}{r}},
\end{equation*}
for some constant $C$ that depends on $\omega_{0,T}$, $\normp{\bx}$, 
$\normfx{y}$, $\normfx{z}$ and the parameters $\gamma$, $\kappa$, $p$, $q$ and $r$.

With \eqref{eq:useful:2} applied to $\normsup{y-z}$, 
\eqref{eq:110} and \eqref{eq:111} could be summarized as
    \begin{equation*}
	\normx{Z-Y}
	\leq K\generalnorm{1+\gamma}{f}\normfx{z-y}^{\overline{\kappa}\gamma},
    \end{equation*}
    where $\normx{Z-Y}$ refers to  the norm in $\uQ\uW$ and 
     $K$ is a constant which depends on $\normp{x}$, $\normfx{z}$, $\normfx{y}$, $\kappa$, $\gamma$, 
    $(p,q,r)$ and $\omega_{0,T}$.

    Moreover, $\abs{f(y_0)-f(z_0)}\leq \normsup{\dD f}\abs{y_0-z_0}$
    and 
    \begin{equation*}
	\abs{f(y)^\dag_0-f(z)^\dag_0}\leq \normhold{\gamma}{\dD f}\abs{y_0-z_0}
	+\normsup{\dD f}\abs{\yd_0-\zd_0}.
    \end{equation*}
    Up to changing $K$, we get a similar inequality as 
    $\normx{Z-Y}$ is replaced by~$\normfx{Z-Y}$.

    We have then proved that $\fO f$ is locally of class
    $\cC^{\overline{\kappa}\gamma}$ from $\uP\uV$ to $\uQ\uW$. 

    \smallskip
    \noindent IV) For dealing with the general case $f\in\cC^{k+1+\gamma}(\uV,\uW)$, 
    we apply the converse of the Taylor theorem as in the proof of 
    Proposition~\ref{prop:om:fpv}, using II), the proof being in all points
    similar.
\end{proof}

When considering a fixed point for $y\mapsto \fF(y,z,f):=\int f(y)\vd z$, 
$\fF$ should map $\uP\uV$ to $\uP\uV$. Owing to Propositions~\ref{prop:crp}
and \ref{prop:om:crp}, a suitable choice is 
\begin{equation*}
q=q'=p,\ r=r'=p/2.
\end{equation*}
From now, we use these values $(q',r')=(p,p/2)$.

We now state an existence and uniqueness result for solutions of RDE.
Its proof may be found, up to a straightforward modification for dealing with $b\not=0$, 
in~\cites{gubinelli,friz14a}. 

\begin{proposition} For any $f\in\cC^{k+1+\gamma}(\uV,L(\uW,\uV))$ with $k\geq 0$, 
    any $z\in\uP\uW$ and any $a\in\uV$, $b\in L(\uW,\uV)$, then there exists
    a unique CRP $y\in\uP\uV$ which solves
    \begin{equation}
	\label{eq:124}
    y_t=a+\int_0^t f(y_s)\vd z_s+bz_{0,t}
    \text{ with }\yd_t=f(y_t)\zd_t+b\zd_t
    \end{equation}
    for any $t\in[0,T]$.
    Besides, when $\abs{a}+\abs{b}\leq R$, then 
   $y$ belongs to a closed ball of $\uP\uV$ whose radius
    depends only on $\normfi{k+1+\gamma}{f}$, $\omega_{0,T}$, $p$, $\gamma$, $\normx{z}$ 
    and $R$.
\end{proposition}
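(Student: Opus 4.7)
My plan is to recast~\eqref{eq:124} as a fixed-point equation in $\uP\uV$ and apply the Banach fixed-point theorem (equivalently, the contractive version of Theorem~\ref{thm:implicit} described in Remark~\ref{rem:implicit}). With the choice $(q,r)=(q',r')=(p,p/2)$ fixed just above, Proposition~\ref{prop:om:crp} sends $\fO f(y)=(f(y_t))_{t\in[0,T]}$ from $\uP\uV$ into a CRP space which embeds continuously into $\uP L(\uW,\uV)$, and Proposition~\ref{prop:crp} then produces a bilinear continuous integral $y\mapsto\int f(y)\,\vd z$ taking values in $\uP\uV$ with Gubinelli derivative $f(y)\zd$. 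Consequently
\[
\Phi(y)_t := a + \int_0^t f(y_s)\,\vd z_s + b\, z_{0,t},\qquad \Phi(y)^\dag_t := f(y_t)\zd_t + b\,\zd_t,
\]
is a well-defined map from $\uP\uV$ into itself whose fixed points are exactly the solutions of~\eqref{eq:124}.

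The key step is a contraction estimate on a short interval. For $y_1,y_2$ in a closed ball of $\uP\uV$ of radius $R'$ sharing the same initial pair $(a,f(a)\zd_0+b\zd_0)$, the difference $g:=f(y_1)-f(y_2)$ is itself a CRP with Gubinelli derivative $\dD f(y_1)\,y_1^\dag-\dD f(y_2)\,y_2^\dag$; estimating the $\uQ$-norm of $g$ exactly as in part~III) of the proof of Proposition~\ref{prop:om:crp} (using Lemma~\ref{lem-1} and the bounds~\eqref{eq:useful:1}--\eqref{eq:useful:2}) and plugging into~\eqref{eq:107} yields
\[
\normfx{\Phi(y_1)-\Phi(y_2)}\leq C\,\omega_{0,T}^{\eta}\,\normfx{y_1-y_2}
\]
for some $\eta>0$ and some $C$ depending only on $\normfi{k+1+\gamma}{f}$, $\normp{\bx}$, $\normfx{z}$ and $R'$. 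The crucial gain $\omega_{0,T}^{\eta}$ stems from the fact that $\Phi(y_1)-\Phi(y_2)$ starts at $(0,0)$, so every term in $\normfx{\cdot}$ carries a positive power of the control. A parallel a priori bound shows that $\Phi$ maps the ball $B_{R'}$ into itself once $R'$ is chosen large enough in terms of $R$, $\normfi{k+1+\gamma}{f}$, $\normfx{z}$, $\normp{\bx}$ and $\omega_{0,T}$, and $C\,\omega_{0,\tau}^{\eta}<1$ as soon as $\tau$ is small enough; Banach's theorem then delivers a unique local fixed point.

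To cover $[0,T]$, I use the super-additivity of $\omega$ to partition it into finitely many consecutive sub-intervals $[t_i,t_{i+1}]$ with $\omega_{t_i,t_{i+1}}$ below the contractivity threshold, solve on each starting from the final value of the previous one, and concatenate the pieces into a global CRP solution; uniqueness propagates interval by interval. The resulting bound $\normfx{y}\leq\rho$ follows from the uniform a priori bound on each piece together with the fact that the number of pieces depends only on $\omega_{0,T}$ and the listed parameters. The main obstacle is the contraction estimate: because Proposition~\ref{prop:om:crp} delivers only $\overline{\kappa}\gamma$-Hölder continuity of $\fO f$ when $k=0$, one must carefully exploit the zero initial data of $y_1-y_2$ at the base point to convert this Hölder bound into a genuine Lipschitz estimate with a small prefactor. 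All required ingredients are already contained in the proofs of Propositions~\ref{prop:om:crp} and~\ref{prop:crp}, so the argument is a direct transcription, with obvious adaptations to accommodate $b\neq 0$, of the proofs given in~\cites{gubinelli,friz14a}.
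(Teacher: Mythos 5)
The paper gives no proof of this proposition: it simply defers to \cites{gubinelli,friz14a}, noting that only a straightforward modification handles $b\neq 0$. Your outline (fixed point for $\Phi(y)=a+\int \fO f(y)\vd z+bz_{0,\cdot}$, contraction on a short interval via the Omega lemma and the bilinear integral, then concatenation using super-additivity of $\omega$) is exactly the standard route of those references, so in spirit you and the paper agree.

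There is, however, a genuine gap in your contraction step, and it sits precisely where you wave at it. You assert that estimating $g=f(y_1)-f(y_2)$ ``exactly as in part III) of the proof of Proposition~\ref{prop:om:crp}'' yields $\normfx{\Phi(y_1)-\Phi(y_2)}\leq C\omega_{0,T}^{\eta}\normfx{y_1-y_2}$. Part III) does not give that: for $f\in\cC^{1+\gamma}$ it gives only $\normfx{\fO f(y_1)-\fO f(y_2)}\leq K\normfx{y_1-y_2}^{\overline{\kappa}\gamma}$, and a H\"older modulus with exponent $\overline{\kappa}\gamma<1$ cannot feed Banach's fixed point theorem (for small $\normfx{y_1-y_2}$ the bound is \emph{weaker} than linear). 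The zero initial data of $y_1-y_2$ buys you the small prefactor $\omega_{0,\tau}^{\eta}$, but it cannot upgrade a H\"older modulus in $\normfx{y_1-y_2}$ into a Lipschitz one; the standard device for that is to write $f(y_1)-f(y_2)=\bigl(\int_0^1\dD f(y_2+\theta(y_1-y_2))\vd\theta\bigr)(y_1-y_2)$ and treat the bracket as a CRP, which requires $\dD f\in\cC^{1+\gamma}$, i.e.\ $k\geq 1$. So your argument closes when $k\geq 1$ (then $\fO f$ is of class $\cC^{k+\overline{\kappa}\gamma}$ with $k\geq 1$, hence locally Lipschitz, and the rest of your sketch is fine), but for $k=0$ the uniqueness claim is not reached by this method --- indeed the cited references themselves assume one more derivative (e.g.\ $\cC^3_b$ in \cite{friz14a}) exactly for this reason. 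You should either restrict the contraction argument to $k\geq 1$ or supply a genuinely different uniqueness argument for the borderline case.
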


Let us set
\begin{equation*}
\uX_{k,\gamma}:=\uV\times L(\uV,\uW)\times\cC^{k+1+\gamma}(\uV,L(\uW,\uV))\times \uP\uW.
\end{equation*}
We then define the \emph{Itô map} $\fI$ as the map sending $(a,b,f,z)\in\uX_{k,\gamma}$ to $(y,\yd)$ given
by \eqref{eq:124}.
Besides, \eqref{eq:124} is equivalent in finding $\ystar\in\uPs\uV$ which solves
\begin{equation*}
    \ystar_t=\int_0^t f(\ystar_s+a+bz_{0,s})\vd z_s
\end{equation*}
and then to set $y_t:=\ystar_t+a+bz_{0,t}$.

The proof of the regularity result is now in all points similar to the one for paths of finite $p$-variation
so that we skip it.

\begin{theorem}
    \label{thm:crp} 
    Under the above conditions, the Itô map $\fI:\uX_{k,\gamma}\to\uP\uV$
    is locally of class $\cC^{k+\overline{\kappa}\gamma}$ for any $\kappa\in(0,1)$
    with $1+\kappa\gamma>p$.
\end{theorem}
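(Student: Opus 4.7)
The plan is to follow the same strategy as for Theorem~\ref{thm:regul:1}, with Proposition~\ref{prop:om:crp} (the Omega lemma for CRP) and Proposition~\ref{prop:crp} (bilinear integration of CRP against CRP) taking over the roles of Proposition~\ref{prop:om:fpv} and Proposition~\ref{prop:young}. Set
\begin{equation*}
\fF(y,z,f):=\int_0^\cdot f(y_s)\vd z_s,
\end{equation*}
defined by first applying $\fO f$ to $y\in\uP\uV$, which yields a CRP in $\uQ L(\uW,\uV)$ with the choice $(q,r)=(p,p/2)$, and then integrating against $z\in\uP\uW$ via Proposition~\ref{prop:crp} with $(q',r')=(p,p/2)$, so that the output lies in $\uP\uV$. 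Composing the local $\cC^{k+\overline{\kappa}\gamma}$ regularity of $y\mapsto \fO f(y)$ with the bilinear continuity of the integral, the chain rule gives that $\fF$ is locally $\cC^{k+\overline{\kappa}\gamma}$ in $y$ and, being linear in $z$ and in $f$ separately, $\cC^\infty$ in each of those.

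Substituting $y=y^\star+a+bz_{0,\cdot}$ with $y^\star\in\uPs\uV$, equation~\eqref{eq:124} becomes the fixed point problem
\begin{equation*}
y^\star=\fG(y^\star,a,b,z,f):=\fF\bigl(y^\star+a+bz_{0,\cdot},z,f\bigr)
\end{equation*}
on $\uPs\uV$. The shift $(a,b,z)\mapsto a+bz_{0,\cdot}$ is affine with values in $\uP\uV$ (the path $bz_{0,\cdot}$ being a CRP with Gubinelli derivative $b\zd$), so $\fG$ inherits from $\fF$ a local $\cC^{k+\overline{\kappa}\gamma}$ regularity in all its arguments. Its Fréchet derivative in $y^\star$ reads
\begin{equation*}
\dD_{y^\star}\fG(y^\star,\dots)\cdot h=\int_0^\cdot \dD f(y_s)\cdot h_s\vd z_s,
\end{equation*}
and maps $\uPs\uV$ into itself because Proposition~\ref{prop:om:crp} guarantees that $h\in\uPs\uV$ produces an integrand in $\uQs L(\uW,\uV)$, and the resulting integral then starts at zero together with its Gubinelli derivative.

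To apply the Implicit Functions Theorem~\ref{thm:implicit}, it remains to invert $\Id-\dD_{y^\star}\fG$ on $\uPs\uV$. On a short interval $[0,\tau]$, estimate~\eqref{eq:107} combined with the Omega bound of Proposition~\ref{prop:om:crp} and the embeddings~\eqref{eq:useful:1}--\eqref{eq:useful:2} gives an operator norm of $\dD_{y^\star}\fG(y^\star,\dots)$ bounded by a constant times a positive power of $\omega_{0,\tau}$, hence strictly less than one for $\tau$ small enough (with a smallness threshold depending only on $\normfi{k+1+\gamma}{f}$, $\normp{\bx}$, $\normx{z}$ and on a bound for $(a,b)$). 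Theorem~\ref{thm:implicit} then yields local $\cC^{k+\overline{\kappa}\gamma}$ regularity of $\fI$ on $[0,\tau]$. The main obstacle is extending this to $[0,T]$: following Remark~\ref{rem:implicit}, one stacks the local solutions on abutting intervals $[\tau_i,\tau_{i+1}]$, using as initial data for the next interval the endpoint data $(y(\tau_i),\yd(\tau_i))$ of the previous one. Since these endpoint data depend $\cC^{k+\overline{\kappa}\gamma}$ on the parameters (use the evaluation map, continuous thanks to~\eqref{eq:useful:1}--\eqref{eq:useful:2}) and since $\cC^{k+\overline{\kappa}\gamma}$ is stable under composition, exactly as in the proof of Corollary~\ref{cor:flow}, the global regularity on $[0,T]$ follows.
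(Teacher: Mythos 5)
Your proposal is correct and follows exactly the route the paper intends: the paper explicitly skips the proof, stating it is ``in all points similar'' to that of Theorem~\ref{thm:regul:1}, i.e.\ the Omega lemma for CRP (Proposition~\ref{prop:om:crp}) composed with the bilinear integral (Proposition~\ref{prop:crp}), a fixed point on $\uPs\uV$ after the shift $y=y^\star+a+bz_{0,\cdot}$, the Implicit Functions Theorem~\ref{thm:implicit} on a short interval, and the stacking argument of Remark~\ref{rem:implicit}. Your write-up supplies precisely these details, including the correct bookkeeping of the indices $(q,r)$ and $(q',r')=(p,p/2)$.
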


For $t\in[0,T]$, let $\fe_t$ be the evaluation map $\fe_{t}(y,\yd)=y_{t}$
(this choice forces the value $\yd_t=f(y_t)$).

The proof of the next result is in all points similar to the one
of Corollary~\ref{cor:flow}.

\begin{corollary} 
    Under the hypotheses of Theorem~\ref{thm:crp}, for any $t>0$, $\ff_t(a):=\fe_t\circ \fI(a,0,f,0)$
    is locally a $\cC^{k+\overline{\kappa}\gamma}$-diffeomorphism from $\uV$ to $\uV$.
\end{corollary}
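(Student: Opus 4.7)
The plan is to mirror the proof of Corollary~\ref{cor:flow}, replacing Young integrals by the CRP integral of Proposition~\ref{prop:crp} and the Omega lemma of Proposition~\ref{prop:om:fpv} by its CRP counterpart Proposition~\ref{prop:om:crp}. First, for $0\leq r\leq t\leq T$ and any $a\in\uV$, introduce the two-parameter flow $y_{t,r}(a)$ as the unique CRP solution in $\uP\uV$ to
\begin{equation*}
    y_{t,r}(a)=a+\int_r^t f(y_{s,r}(a))\vd z_s,\quad y_{t,r}(a)^\dag=f(y_{t,r}(a))\zd_t.
\end{equation*}
Theorem~\ref{thm:crp} applied on the sub-interval $[r,T]$ gives that $a\mapsto y_{\cdot,r}(a)$ is of class $\cC^{k+\overline{\kappa}\gamma}$ into $\uP\uV$, and the evaluation $\fe_t:\uP\uV\to\uV$ is linear continuous (by \eqref{eq:useful:2}), so that $a\mapsto y_{t,r}(a)$ is of class $\cC^{k+\overline{\kappa}\gamma}$ into $\uV$.

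Next I would compute its Fréchet derivative by the chain rule, combining Proposition~\ref{prop:om:crp} with the bilinearity of the CRP integral (Proposition~\ref{prop:crp}), yielding
\begin{equation*}
    \dD_a y_{t,r}(a)=\Id+\int_r^t \dD f(y_{s,r}(a))\,\fe_s\circ\dD_a\fI(a,0,f,0)\vd z_s.
\end{equation*}
The key analytic step is to show that, on a bounded set $\{\abs{a}\leq R\}$, one can choose $t-r$ sufficiently small (in function of $\normfi{k+1+\gamma}{f}$, $\normfx{z}$, $\normp{\bx}$, $\omega_{r,t}$, $p$, $q$, $r$, $\kappa$, $\gamma$ and $R$) so that
\begin{equation*}
    \normop{\dD_a y_{t,r}(a)-\Id}\leq \ell<1.
\end{equation*}
This uses \eqref{eq:106}--\eqref{eq:107} to estimate the CRP integral, together with the boundedness of $\dD f$ and the uniform control on $\normfx{\fe_\cdot\circ\dD_a\fI(a,0,f,0)}$ coming from Theorem~\ref{thm:crp}. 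The power of $\omega_{r,t}$ extracted from \eqref{eq:106} (strictly larger than $1/p$) ensures the right-hand side tends to $0$ as $t-r\to 0$, which I expect to be the main technical obstacle: one must carefully track the exponent $\widehat{\theta}>1$ and check that the contribution of $\yd z^\flat$ is also small via \eqref{eq:105} and \eqref{eq:useful:2}.

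Once $\dD_a y_{t,r}(a)$ is shown invertible, the Inverse Mapping Theorem (as in \cite{abraham}*{Theorem~2.5.2}) together with Remark~\ref{rem:implicit} gives that $y_{t,r}$ is locally a $\cC^{k+\overline{\kappa}\gamma}$-diffeomorphism around any $a\in\uV$, for $t-r$ small. Finally, to extend to arbitrary $t\in[0,T]$, I would invoke the flow property $y_{t,r}(a)=y_{t,s}(y_{s,r}(a))$ for $r\leq s\leq t$, which follows from additivity of the integral and uniqueness in Theorem~\ref{thm:crp}. Splitting $[0,t]$ into finitely many abutting sub-intervals $[t_i,t_{i+1}]$ small enough, $y_{t,0}$ is a composition of local $\cC^{k+\overline{\kappa}\gamma}$-diffeomorphisms, hence itself locally a $\cC^{k+\overline{\kappa}\gamma}$-diffeomorphism, using the stability of this class under composition. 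Since $\ff_t(a)=y_{t,0}(a)$, this concludes the proof.
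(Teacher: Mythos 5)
Your proposal is correct and follows exactly the route the paper intends: the paper gives no separate proof here, stating only that the argument is ``in all points similar'' to that of Corollary~\ref{cor:flow}, and your transposition (CRP integral of Proposition~\ref{prop:crp} in place of the Young integral, Proposition~\ref{prop:om:crp} in place of Proposition~\ref{prop:om:fpv}, estimates \eqref{eq:106}--\eqref{eq:107} in place of \eqref{eq:115}, then the Inverse Mapping Theorem, the flow property and composition) is precisely that transposition.
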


Since $\uP_{p,p,p/2}(\bx,\uV)$ is continuously embedded in $\uC_p(\uV)$,
we could consider this approach for studying the regularity 
of 
\begin{equation}
    \label{eq:120}
y_t=a+\int_0^t f(y_s)\vd z_s+\int_0^t g(y_s)\vd h_s+b_t,\ t\geq 0,
\end{equation}
where for a rough path $\bx$, $z\in\uP_{p,p,p/2}(\bx,\uW)$, 
$h\in\uC_q(\uW')$, $f$ and $g$ are maps 
respectively of class $\cC^{k+1+\gamma}$ from $\uV$ to $L(\uW,\uV)$ and 
of class $\cC^{k+\delta}$ from $\uV$ to $L(\uW',\uV)$, provided
that 
\begin{equation*}
    \frac{1}{p}+\frac{1}{q}>1,\ 1+\gamma>p\text{ and }1+\delta>q\text{ for }0<\gamma,\delta\leq 1.
\end{equation*}
The map $\fI(f,g,b,y_0,\yd_0)$ giving the solution to \eqref{eq:120}
is then of class $\cC^{k+(1-\kappa)\min\Set{\delta,\gamma}}$.

Using for $z$ the decomposition $\zd=1$ and $z^\sharp=0$,
and replacing the vector field $f$ by $\epsilon f$, it is easily 
seen that we may consider the problem 
\begin{equation*}
y^\epsilon_t=a+\int_0^t f(y_s^\epsilon)\vd \fd_\epsilon\bx_s+\int_0^t g(y_s^\epsilon)\vd h_s
\text{ for }a=(y_0,\yd_0)\text{ given}.
\end{equation*}
Asymptotic expansions in $\epsilon$ can then be performed
as in \cites{bailleul15b,inahama1,inahama07b,inahama4,inahama5}.

\begin{bibdiv}
\begin{biblist}

\bib{abraham}{book}{
    author = {Abraham, R.},
   author = {Marsden, J.E.},
  author = {Ratiu, T.},
    title = {Manifolds, Tensor Analysis, and Applications},
    edition = {2},
    publisher = {Springer-Verlag},
    series = {Applied Mathematical Sciences},
    year = {1988},
}

\bib{albrecht}{article}{
   author={Albrecht, F.},
   author={Diamond, H. G.},
   title={A converse of Taylor's theorem},
   journal={Indiana Univ. Math. J.},
   volume={21},
   date={1971/72},
   pages={347--350},
}

\bib{bailleul}{article}{
	author = {Bailleul, I.},
	title = {Flows driven by rough paths},
	journal = {Revista Matemática Iberoamericana},
	year = {2015},
	volume={31}, number={3},  pages ={901--934},
	doi = {10.4171/RMI/858},
    }

\bib{bailleul15}{article}{
    author = {Bailleul, I.},
   title={Regularity of the It\^o-Lyons map},
   journal={Confluentes Math.},
   volume={7},
   date={2015},
   number={1},
   pages={3--11},
   doi={10.5802/cml.15},
}

\bib{bailleul15b}{article}{
author = {Bailleul, I.},
author = {Riedel, S.},
title = {Rough flows},
year = {2015},
eprint = {arxiv:1505.01692v1},
}

\bib{baudoin5}{article}{
   author={Baudoin, Fabrice},
   author={Coutin, Laure},
   title={Self-similarity and fractional Brownian motions on Lie groups},
   journal={Electron. J. Probab.},
   volume={13},
   date={2008},
   pages={no. 38, 1120--1139},
}

\bib{baudoin2}{article}{
   author={Baudoin, F.},
   author={Hairer, M.},
   title={A version of H\"ormander's theorem for the fractional Brownian
   motion},
   journal={Probab. Theory Related Fields},
   volume={139},
   date={2007},
   number={3-4},
   pages={373--395},
   doi={10.1007/s00440-006-0035-0},
}

\bib{baudoin}{article}{
   author={Baudoin, F.},
   author={Ouyang, C.},
   title={Gradient bounds for solutions of stochastic differential equations
   driven by fractional Brownian motions},
   conference={
      title={Malliavin calculus and stochastic analysis},
   },
   book={
      series={Springer Proc. Math. Stat.},
      volume={34},
      publisher={Springer, New York},
   },
   date={2013},
   pages={413--426},
   doi={10.1007/978-1-4614-5906-4\_18},
}

\bib{baudoin3}{article}{
   author={Baudoin, F.},
   author={Ouyang, C.},
   author={Tindel, S.},
   title={Upper bounds for the density of solutions to stochastic
   differential equations driven by fractional Brownian motions},
   journal={Ann. Inst. Henri Poincaré Probab. Stat.},
   volume={50},
   date={2014},
   number={1},
   pages={111--135},
   doi={10.1214/12-AIHP522},
}

\bib{baudoin4}{article}{
   author={Baudoin, F.},
   author={Zhang, X.},
   title={Taylor expansion for the solution of a stochastic differential
   equation driven by fractional Brownian motions},
   journal={Electron. J. Probab.},
   volume={17},
   date={2012},
   pages={no. 51, 21},
   doi={10.1214/EJP.v17-2136},
}

\bib{besalu}{article}{
   author={Besal{\'u}, M.},
   author={Nualart, D.},
   title={Estimates for the solution to stochastic differential equations
   driven by a fractional Brownian motion with Hurst parameter
   $H\in(\frac13,\frac12)$},
   journal={Stoch. Dyn.},
   volume={11},
   date={2011},
   number={2-3},
   pages={243--263},
   doi={10.1142/S0219493711003267},
}

\bib{cass}{article}{
   author={Cass, T.},
   author={Friz, P.},
   title={Densities for rough differential equations under H\"ormander's condition},
   journal={Ann. of Math. (2)},
   volume={171},
   date={2010},
   number={3},
   pages={2115--2141},
   doi={10.4007/annals.2010.171.2115},
}

\bib{cass3}{article}{
   author={Cass, T.},
   author={Friz, P.},
   author={Victoir, N.},
   title={Non-degeneracy of Wiener functionals arising from rough
   differential equations},
   journal={Trans. Amer. Math. Soc.},
   volume={361},
   date={2009},
   number={6},
   pages={3359--3371},
   doi={10.1090/S0002-9947-09-04677-7},
}

\bib{cass2}{article}{
    author = {Cass, T.},
    author = {Litterer, C.},
    author = {Lyons, T.},
    title = {Integrability and Tail Estimates for Gaussian Rough Differential Equations},
    year = {2013},
    journal = {Ann. Probab.},
    volume = {41},
    number = {4},
    pages = {3026-3050},
    doi = {10.1214/12-AOP821},
}

\bib{chronopoulou}{article}{
    author = {Chronopoulou, A.},
    author = {Tindel, S.},
   title={On inference for fractional differential equations},
   journal={Stat. Inference Stoch. Process.},
   volume={16},
   date={2013},
   number={1},
   pages={29--61},
   doi={10.1007/s11203-013-9076-z},
}

\bib{coutin12a}{article}{
    author = {Coutin, L.},
    title = {Rough paths \textit{via} sewing lemma},
    journal = {ESAIM P\&S},
    year = {2012},
    volume = {16},
    pages = {479--526},
    doi = {10.1051/ps/2011108},
}

\bib{coutin}{article}{
   author={Coutin, L.},
   author={Lejay, A.},
   title={Perturbed linear rough differential equations},
   journal={Ann. Math. Blaise Pascal},
   volume={21},
   date={2014},
   number={1},
   pages={103--150},
}

\bib{decreusefond}{article}{
    author = {Decreusefond, L.},
    author = {Nualart, D.},
     title = {Flow properties of differential equations driven by fractional
              {B}rownian motion},
    book = {
     title = {Stochastic differential equations: theory and applications},
	series = {Interdiscip. Math. Sci.},
	volume = {2},
     publisher = {World Sci. Publ., Hackensack, NJ},
	  year = {2007},
    },
     pages = {249--262},
}

\bib{deya}{article}{
   author={Deya, A.},
   author={Tindel, S.},
   title={Malliavin calculus for fractional heat equation},
   conference={
      title={Malliavin calculus and stochastic analysis},
   },
   book={
      series={Springer Proc. Math. Stat.},
      volume={34},
      publisher={Springer, New York},
   },
   date={2013},
   pages={361--384},
   doi={10.1007/978-1-4614-5906-4\_16},
}

\bib{zhang12a}{article}{
    author = {Feng, Q.},
    author = {Zhang, X.},
title =  {Taylor expansions and Castell estimates for solutions of stochastic differential equations driven by rough paths},
    eprint = {arXiv:1209.4624},
    year = {2012-09},
}

\bib{feyel}{article}{
   author={Feyel, D.},
   author={de La Pradelle, A.},
   author={Mokobodzki, G.},
   title={A non-commutative sewing lemma},
   journal={Electron. Commun. Probab.},
   volume={13},
   date={2008},
   pages={24--34},
}

\bib{friz14a}{book}{
    author = {Friz, P.K.},
   author = {Hairer, M.},
    title = {A Course on Rough Paths},
    year = {2014},
    series = {Universitext},
    publisher = {Springer-Verlag},
}

\bib{friz1}{article}{
	author = {Friz, P.},
	author = {Riedel, S.},
   title={Integrability of (non-)linear rough differential equations and
   integrals},
   journal={Stoch. Anal. Appl.},
   volume={31},
   date={2013},
   number={2},
   pages={336--358},
   doi={10.1080/07362994.2013.759758},
}

\bib{friz-victoir}{book}{
	author = {Friz, P.},
	author = {Victoir, N.},
	title = {Multidimensional Stochastic Processes as Rough Paths. Theory and Applications},
	publisher = {Cambridge University Press},
	year = {2010},
}

\bib{gobet}{article}{
   author={Gobet, E.},
   author={Munos, R.},
   title={Sensitivity analysis using It\^o-Malliavin calculus and
   martingales, and application to stochastic optimal control},
   journal={SIAM J. Control Optim.},
   volume={43},
   date={2005},
   number={5},
   pages={1676--1713},
   doi={10.1137/S0363012902419059},
}

\bib{gubinelli}{article}{
    author = {Gubinelli, M.},
    title = {Controlling rough paths},
    journal = {J. Func. Anal.}, 
    volume = {216}, 
    pages = {86--140}, 
    year = {2004},
}

\bib{hairer}{article}{
   author={Hairer, M.},
   author={Pillai, N. S.},
   title={Ergodicity of hypoelliptic SDEs driven by fractional Brownian
   motion},
   journal={Ann. Inst. Henri Poincar\'e Probab. Stat.},
   volume={47},
   date={2011},
   number={2},
   pages={601--628},
   doi={10.1214/10-AIHP377},
}

\bib{hu}{article}{
	author = {Hu, Y.},
	author = {Tindel, S.},
   title={Smooth density for some nilpotent rough differential equations},
   journal={J. Theoret. Probab.},
   volume={26},
   date={2013},
   number={3},
   pages={722--749},
   doi={10.1007/s10959-011-0388-x},
}

\bib{inhama8}{article}{
   author={Inahama, Y.},
   title={Malliavin differentiability of solutions of rough differential
   equations},
   journal={J. Funct. Anal.},
   volume={267},
   date={2014},
   number={5},
   pages={1566--1584},
   doi={10.1016/j.jfa.2014.06.011},
}

\bib{inahama2}{article}{
   author={Inahama, Y.},
   title={Laplace approximation for rough differential equation driven by
   fractional Brownian motion},
   journal={Ann. Probab.},
   volume={41},
   date={2013},
   number={1},
   pages={170--205},
   doi={10.1214/11-AOP733},
}

\bib{inahama3}{article}{
	author = {Inahama, Y.},
	title = {A moment estimate of the derivative process in rough path theory},
	 journal={Proc. Amer. Math. Soc.},
   volume={140},
   date={2012},
   number={6},
   pages={2183--2191},
   doi={10.1090/S0002-9939-2011-11051-7},
}

\bib{inahama1}{article}{
   author={Inahama, Y.},
   title={A stochastic Taylor-like expansion in the rough path theory},
   journal={J. Theoret. Probab.},
   volume={23},
   date={2010},
   number={3},
   pages={671--714},
   doi={10.1007/s10959-010-0287-6},
}

\bib{inahama4}{article}{
   author={Inahama, Y.},
   author={Kawabi, H.},
   title={On the Laplace-type asymptotics and the stochastic Taylor
   expansion for It\^o functionals of Brownian rough paths},
   conference={
      title={Proceedings of RIMS Workshop on Stochastic Analysis and
      Applications},
   },
   book={
      series={RIMS K\^oky\^uroku Bessatsu, B6},
      publisher={Res. Inst. Math. Sci. (RIMS), Kyoto},
   },
   date={2008},
   pages={139--152},
}

\bib{inahama5}{article}{
   author={Inahama, Y.},
   author={Kawabi, H.},
   title={On asymptotics of Banach space-valued It\^o functionals of
   Brownian rough paths},
   conference={
      title={Stochastic analysis and applications},
   },
   book={
      series={Abel Symp.},
      volume={2},
      publisher={Springer},
      place={Berlin},
   },
   date={2007},
   pages={415--434},
}

\bib{inahama07b}{article}{
    author = {Inahama, Y.},
    author =  {Kawabi, H.},
     title = {Asymptotic expansions for the {L}aplace approximations for
              {I}t\^o functionals of {B}rownian rough paths},
   journal = {J. Funct. Anal.},
    volume = {243},
      year = {2007},
    number = {1},
     pages = {270--322},
}

\bib{lejay1}{incollection}{
    author = {Lejay, A.},
    title = {Global solutions to rough differential equations with unbounded vector fields}, 
    book = {
	title = {Séminaire de Probabilités XLIV},
	year = {2012},
	volume = {2046},
	publisher = {Springer-Verlag},
	editor = {Donati-Martin, C.},
	editor = {Lejay, A.},
	editor = {Rouault, A.},
	series = {Lecture Notes in Mathematics},
    },
}

\bib{lejay2}{article}{
    author = {Lejay, A.},
    title = {On rough differential equations},
    year = {2009},
    journal={Electron. J. Probab.},
    volume={14},
    number={12},
    pages={341--364},
}

\bib{lejay3}{article}{
    author = {Lejay, A.},
    title = {Controlled differential equations as Young integrals: a simple approach},
    year = {2010},
    journal = {J. Differential Equations},
    volume = {249}, 
    pages = {1777-1798},
    doi = {10.1016/j.jde.2010.05.006},
}

\bib{lejay08b}{article}{
	author = {Lejay, A.},
	title = {Yet another introduction to rough paths},
	year = {2009},	
	book = {
	    title = {Séminaire de probabilités XLII},
	    series = {Lecture Notes in Mathematics},
	    publisher = {Springer-Verlag},
	    volume = {1979},
    },
    pages = {1--101},
}

\bib{lejay03a}{inproceedings}{
	author = {Lejay, A.},
	title = {An introduction to rough paths},
	year = {2003}, 
	book={
	title = {S{\'e}minaire de probabilit\'es, XXXVII},
	series = {Lecture Notes in Mathematics},
	publisher = {Springer-Verlag},
	volume = {1832},
	},
	pages = {1--59},
}

\bib{lejay-victoir}{article}{
	author = {Lejay, A.},
	author = {Victoir, N.},
	title = {On $(p,q)$-rough paths},
	journal = {J. Differential Equations},
	year = {2006},
	volume = {225},
	number = {1},
	pages = {103--133},
}

\bib{leon}{article}{
	author = {Leon, J.A.},
	author = {Tindel, S.},
	title = {Malliavin calculus for fractional delay equations},
   journal={J. Theoret. Probab.},
   volume={25},
   date={2012},
   number={3},
   pages={854--889},
   doi={10.1007/s10959-011-0349-4},
}

\bib{li05a}{article}{
	author = {Li, X.D.},
	author = {Lyons, T.J.},
	title = {Smoothness of It{\^o} maps and diffusion processes on path spaces. I},
	journal = {Ann. Sci. {\'E}cole Norm. Sup.},
	year = {2006},
	volume={39},
	number={4},
	pages={649--677},
}

\bib{lyons02b}{book}{
	author = {Lyons, T.},	
    author={Qian, Z.},
	title = {System Control and Rough Paths},
	series = {Oxford Mathematical Monographs},
	publisher = {Oxford University Press},
	year = {2002},
}

\bib{lyons06b}{inproceedings}{
	author = {Lyons, T.},
    author={Caruana, M.},
    author= {L{\'e}vy, T.},
	title = {Differential Equations Driven by Rough Paths},
	book = {
	    title = {{\'E}cole d'{\'e}t{\'e} des probabilit{\'e}s de Saint-Flour XXXIV --- 2004},
	editor = {Picard, J.},
	series = {Lecture Notes in Mathematics},
	year = {2007},
	volume = {1908},
	 publisher = {Springer},
    },
}

\bib{lyons97c}{inproceedings}{
    author = {Lyons, T. J.},
    author = {Qian, Z. M.},
     title = {Calculus of variation for multiplicative functionals},
 book = {
    title = {New trends in stochastic analysis (Charingworth, 1994)},
 publisher = {World Sci. Publishing},
   address = {River Edge, NJ},
      year = {1997},
    },
     pages = {348--374},
}

\bib{lyons97b}{article}{
    author = {Lyons, T. J.},
    author = {Qian, Z.},
     title = {Flow equations on spaces of rough paths},
   journal = {J. Funct. Anal.},
    volume = {149},
      year = {1997},
    number = {1},
     pages = {135--159},
     doi = {10.1006/jfan.1996.3088},
}

\bib{lyons-qian}{article}{
   author={Lyons, T.},
   author={Qian, Z.},
   title={Flow of diffeomorphisms induced by a geometric multiplicative functional},
   journal={Probab. Theory Related Fields},
   volume={112},
   date={1998},
   number={1},
   pages={91--119},
   doi={10.1007/s004400050184},
}

\bib{lyons-victoir}{article}{
   author={Lyons, T.},
   author={Victoir, N.},
   title={An extension theorem to rough paths},
   journal={Ann. Inst. H. Poincar\'e Anal. Non Lin\'eaire},
   volume={24},
   date={2007},
   number={5},
   pages={835--847},
   doi={10.1016/j.anihpc.2006.07.004},
}

\bib{malliavin}{book}{
   author={Malliavin, P.},
   author={Thalmaier, A.},
   title={Stochastic calculus of variations in mathematical finance},
   series={Springer Finance},
   publisher={Springer-Verlag},
   place={Berlin},
   date={2006},
}

\bib{marie}{article}{
   author={Marie, N.},
   title={Sensitivities \textit{via} rough paths},
   journal={ESAIM Probab. Stat.},
   volume={19},
   date={2015},
   pages={515--543},
   doi={10.1051/ps/2015001},
}
	
\bib{norton}{article}{
    author = {Norton, A.},
    title = {A critical set with nonnull image has large Hausdorff dimension},
    journal = {Trans. Amer. Math. Soc.},
    year = {1986},
    volume = {296},
    number = {1},
    pages = {367--376},
    doi = {10.2307/2000579},
}

\bib{nualart-book}{book}{
   author={Nualart, D.},
   title={The Malliavin calculus and related topics},
   series={Probability and its Applications (New York)},
   edition={2},
   publisher={Springer-Verlag},
   place={Berlin},
   date={2006},
}

\bib{nualart}{article}{
   author={Nualart, D.},
   author={Saussereau, B.},
   title={Malliavin calculus for stochastic differential equations driven by
   a fractional Brownian motion},
   journal={Stochastic Process. Appl.},
   volume={119},
   date={2009},
   number={2},
   pages={391--409},
   doi={10.1016/j.spa.2008.02.016},
}

\bib{qian}{article}{
    author = {Qian, Z.},
    author = {Tudor, J.},
    title = {Differential Structure and Flow equations on Rough Path Space},
   journal={Bull. Sci. Math.},
   volume={135},
   date={2011},
   number={6-7},
   pages={695--732},
   doi={10.1016/j.bulsci.2011.07.011},
}

\bib{unterberger}{article}{
	author = {Unterberger, J.},
   author={Unterberger, J{\'e}r{\'e}mie},
   title={Moment estimates for solutions of linear stochastic differential
   equations driven by analytic fractional Brownian motion},
   journal={Electron. Commun. Probab.},
   volume={15},
   date={2010},
   pages={411--417},
   doi={10.1214/ECP.v15-1574},
}

\bib{young36a}{article}{
	author = {Young, L.C.},
	title = {An inequality of the {H}{\"o}lder type, connected with {S}tieltjes integration},
	journal = {Acta Math.},
	volume ={67},
	pages ={251--282},
	year = {1936}
}

\end{biblist}
\end{bibdiv}

\end{document}